\title{Relative homological algebra, Waldhausen $K$-theory, and quasi-Frobenius conditions.}
\date{March 2013}
\begin{document}

\begin{abstract}
We study the question of the existence of a Waldhausen category on any (relative) abelian category in which the contractible objects are the (relatively) projective objects.
The associated $K$-theory groups are ``stable algebraic $G$-theory,'' which in degree zero form a certain stable representation group. We prove both some existence and nonexistence results
about such Waldhausen category structures, including the fact that, while it was known that the category of $R$-modules admits a model category structure
if $R$ is quasi-Frobenius, that assumption is required even to get a Waldhausen category structure with cylinder functor---i.e., Waldhausen categories
do not offer a more general framework than model categories for studying stable representation theory of rings. We study multiplicative structures on these Waldhausen categories, and we relate stable algebraic $G$-theory to
algebraic $K$-theory and we compute stable algebraic $G$-theory for 
finite-dimensional quasi-Frobenius nilpotent extensions of finite fields.
Finally, we show that the connective stable $G$-theory spectrum of 
$\mathbb{F}_{p^n}[x]/x^{p^n}$ is a complex oriented ring spectrum, partially 
answering
a question of J. Morava about complex orientations on algebraic $K$-theory spectra.
\end{abstract}

\maketitle
\tableofcontents

\section{Introduction.}
Waldhausen's paper \cite{MR802796} defines several kinds of categorical
structure which are meaningful for algebraic $K$-theory. 
A {\em category with cofibrations and weak equivalences,} also called a 
{\em Waldhausen category}, has just enough structure for Waldhausen's machinery
to produce an associated $K$-theory infinite loop space. 
A Waldhausen category which satisfies additional axioms and/or has
additional structure 
will have better properties which e.g. make the problem of actually
computing the associated $K$-theory more tractable.
For example, a Waldhausen category satisfying the ``extension axiom''
and the ``saturation axiom'' and equipped with an additional structure
called a ``cylinder functor'' admits Waldhausen's {\em Localization Theorem}
(see \cite{MR802796}), a computationally powerful result 
that describes the sense in which
localizations of the Waldhausen category induce long exact sequences in 
the $K$-theory groups.

Meanwhile, in stable representation theory, one regards the projective
modules over a ring as ``contractible,'' and maps of modules that factor through
projective modules are regarded as ``nulhomotopic.'' This suggests
that the category of modules over a ring perhaps has a Waldhausen category
structure in which the objects weakly equivalent to zero---the contractible
objects---are precisely the projective modules.
More generally, one has the tools of {\em relative homological algebra}: if
one chooses a sufficiently well-behaved class of objects in an abelian category,
one can do a form of homological algebra in which the chosen class of objects
plays the role of projective objects. Given an abelian category and a class 
of relative projective objects, one wants to know if there is a 
natural Waldhausen category structure on that abelian category, such
that the contractible objects are precisely the relative projectives.
One also wants to know how many extra axioms are satisfied by, and how much additional structure is admitted by, such a Waldhausen category.

In this paper we prove the following theorems that answer the above questions, and
explain fundamental properties of the relationships between Waldhausen 
$K$-theory, relative homological algebra, and stable representation theory:
\begin{enumerate}
\item Definition-Proposition~\ref{waldhausen cat from an allowable class}:
Given an abelian category $\mathcal{C}$ and a 
sufficiently nice pair of allowable classes $E,F$ in $\mathcal{C}$, there exists
a Waldhausen category structure on $\mathcal{C}$ whose weak equivalences
are the $E$-stable equivalences and whose cofibrations are the
$F$-monomorphisms. In particular, the $E$-projective objects are precisely
the contractible objects in this Waldhausen category. This Waldhausen
category satisfies the saturation axiom and the extension axiom.
\item Theorem~\ref{existence of cylinder functors}:
If $\mathcal{C}$ has enough injectives, then $\mathcal{C}$ has a cylinder functor
satisfying the cylinder axiom if and only if $\mathcal{C}$ obeys a certain
generalized quasi-Frobenius condition: every object must functorially embed in
an $E$-projective object by an $F$-monomorphism.
\item As a consequence, we have Corollary~\ref{main cor}: any quasi-Frobenius
abelian category with enough projectives and functorially enough
injectives admits the structure of a Waldhausen category whose
weak equivalences are the stable equivalences and whose 
cofibrations are the monomorphisms. This Waldhausen category
has a cylinder functor, and it satisfies the saturation, extension, and
cylinder axioms.
\item As a consequence, we have Corollary~\ref{algebraic main cor}:
if $R$ is a finite-dimensional quasi-Frobenius algebra over a
finite field, then the category of finitely generated (left) $R$-modules
admits the structure of a Waldhausen
category in which the cofibrations are the monomorphisms and the
weak equivalences are the stable equivalences. 
This Waldhausen category satisfies the 
saturation and extension axioms, and it admits a cylinder functor
satisfying the cylinder axiom.
\item In Proposition~\ref{existence of multiplicative structure on stable g-thy}
we show that, for a finite-dimensional co-commutative Hopf algebra over a finite field,
this Waldhausen category is even better: it has a multiplicative structure
coming from the tensor product of modules over the base field,
and this multiplicative structure gives rise to the structure of a 
homotopy-commutative 
ring spectrum on the Waldhausen $K$-theory spectrum of the category.
\item We use the theorems described above to prove our
Proposition~\ref{stable G is delooping}, in which we we show that,
for finite-dimensional quasi-Frobenius nilpotent extensions of
finite fields, stable $G$-theory is a delooping of relative
algebraic $K$-theory. 
\item As a consequence, we get our 
Theorem~\ref{main thm computing Gst}, in which we prove that, under 
the same assumptions on the ring, stable $G$-theory in positive degrees vanishes
$\ell$-adically and is isomorphic to topological cyclic homology with a degree
shift $p$-adically, where $p$ is the characteristic of the base field.
\item Finally, in Theorem~\ref{main thm, multiplicative version},
we compute the homotopy groups of the connective cover of
the stable $G$-theory spectrum $g_{st}(\mathbb{F}_{p}[x]/x^{p^n})$
of truncated polynomial algebras over $\mathbb{F}_p$, in terms
of the Hesselholt-Madsen description of the topological cyclic
homology of truncated polynomial algebras. We also provide one possible answer
to a question (unpublished) of J. Morava: under what circumstances
does algebraic $K$-theory, or a localization or other modification thereof, admit
the structure of a complex oriented ring spectrum?
Our answer is that the connective stable $G$-theory spectrum
$g_{st}(\mathbb{F}_{p}[x]/x^{p^n})$ is, for all $p$ and $n$, a complex oriented
ring spectrum.
\end{enumerate}

``Quasi-Frobenius conditions'' appear prominently throughout this paper. 
Recall that a ring $R$ is said to be quasi-Frobenius if every projective
$R$-module is injective and vice versa.
The appearance of these conditions in connection with Waldhausen $K$-theory
stems from the theorem of Faith and Walker
(see \cite{MR6728009} for a good account of this and related theorems):
\begin{theorem} {\bf (Faith-Walker.)}\label{faith-walker thm}
A ring $R$ is quasi-Frobenius if and only if every $R$-module embeds in a projective $R$-module.
\end{theorem}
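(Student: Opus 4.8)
The plan is to prove the two implications separately, putting essentially all of the work into the ``if'' direction.

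For the ``only if'' direction, suppose $R$ is quasi-Frobenius. Then every injective module, being projective, is a direct summand of a free module; hence for an arbitrary module $M$ its injective hull $E(M)$ is a direct summand of some free module $F$, and the composite $M \hookrightarrow E(M) \hookrightarrow F$ realizes $M$ as a submodule of a (free, in particular projective) module.

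For the ``if'' direction, assume every $R$-module embeds in a projective module. The elementary observation that starts everything is that an injective module $E$ which embeds in any module $P$ is automatically a direct summand of $P$: extend $\mathrm{id}_E$ along the embedding to obtain a retraction. Applying this to an embedding of $E$ into a projective shows that $E$ is itself projective, so the hypothesis forces \emph{every injective right $R$-module to be projective}. It remains to deduce that $R$ is quasi-Frobenius --- equivalently, that every projective module is also injective, equivalently that $R$ is right self-injective and right Noetherian (once free modules are injective, every projective, as a summand of a free, is injective). For Noetherianness I would use that the hypothesis makes each cyclic module $R/I$ embed in a finitely generated free module $R^{n}$ (the generator maps to a finitely supported tuple), so that every right ideal $I$ is the right annihilator of a finitely generated left ideal; feeding this annihilator condition into the Bass--Papp criterion and examining the injective hulls $E(R/I_{n})$ along a hypothetical strictly ascending chain of right ideals should force the chain to stabilize. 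Right self-injectivity then follows by showing, using Noetherianness together with ``injectives are projective,'' that $E(R_{R})$ is a finitely generated projective module containing $R_{R}$ as an essential submodule, and hence must coincide with $R_{R}$.

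The step I expect to be the main obstacle is precisely this last passage, from ``every injective module is projective'' to the full quasi-Frobenius condition; it amounts to reproving a classical characterization of quasi-Frobenius rings, for which one would otherwise simply cite \cite{MR6728009}. The delicate point is that Noetherianness cannot be bootstrapped formally from ``injectives are projective'' alone --- a direct sum of injectives is then projective, but what one needs is that it is \emph{injective}, which is part of the conclusion --- so the annihilator-ideal consequence of the embedding hypothesis, together with a careful essential-extension argument, is what is needed to break the circle. The ``only if'' direction and the splitting observation, by contrast, are routine.
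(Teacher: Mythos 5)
The paper does not prove this statement at all: it is quoted as a classical theorem of Faith and Walker, with a citation to \cite{MR6728009} for the proof. So there is no in-paper argument to compare yours against, and the relevant question is only whether your sketch would actually constitute a proof.

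The routine parts of your proposal are fine: the ``only if'' direction via the injective hull, the observation that an injective which embeds anywhere splits off (so the hypothesis forces every injective to be projective), and the derivation that every right ideal $I$ is the right annihilator of a finite set (embed $R/I$ into a projective, hence into a free module, and note the image of the generator has finite support). But the passage from there to ``$R$ is right Noetherian and right self-injective'' is a genuine gap, and it is exactly where the content of the Faith--Walker theorem lives. The Bass--Papp criterion needs a direct sum of injectives to be \emph{injective}, and as you yourself note, the hypothesis only makes such a sum projective; the finite-annihilator property of right ideals does not break this circle by itself, and ``examining the injective hulls $E(R/I_n)$ along a hypothetical strictly ascending chain'' is not an argument. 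The standard proof instead takes the injective hull of a free module of large infinite rank, observes it is projective, invokes Kaplansky's theorem that projectives decompose into countably generated summands, and runs a cardinality comparison to conclude that the free module was already injective; Noetherianness and self-injectivity then follow. Your proposal correctly isolates where the difficulty is and candidly concedes it, but as written it reduces the theorem to ``a classical characterization one would otherwise cite,'' which is precisely what the paper does by citing \cite{MR6728009}. If the intent is to give a self-contained proof, the Kaplansky/cardinality step must be supplied; if the intent is to cite the hard implication, then the proposal adds nothing beyond the paper's own treatment.
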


Here is one point of view on the significance of Theorem~\ref{existence of cylinder functors}. It
has been known for a long time, e.g. as described in \cite{MR1650134}, that 
when $R$ is a quasi-Frobenius ring, there exists a model category structure on the category of $R$-modules
in which the cofibrations are the injections and the weak equivalences are the stable equivalences of modules. 
Constructing this
model category structure uses the quasi-Frobenius condition in an essential way. But a Waldhausen category
structure on $R$-modules is weaker, less highly-structured, than a model category structure; so one might
hope that, even in the absence of the quasi-Frobenius condition on $R$, one could put the structure of a 
Waldhausen category on $R$-modules, such that the cofibrations are the injections and the 
weak equivalences are
the stable equivalences of modules. As a consequence of Theorem~\ref{existence of cylinder functors}, one
only gets a Waldhausen category structure with cylinder functor on the category of $R$-modules if $R$
is quasi-Frobenius. So {\em the category of $R$-modules admits a model category structure as desired
if and only if it admits a Waldhausen category structure with cylinder functor as desired.}
(But our results, such as Corollary~\ref{main cor} on existence of the cylindrical Waldhausen category structure, also have the virtue of applying to
quasi-Frobenius abelian categories that are not categories of modules over a
ring.)

So one knows that, when $R$ is a quasi-Frobenius 
ring, then one has the model category of 
$R$-modules with cofibrations inclusions and weak equivalences the stable
equivalences, and from the theorems in this paper which we have described above,
one knows that relaxing the condition that $R$ be quasi-Frobenius does not
enable one to get cylindrical Waldhausen category structures
in any greater generality. Then one asks the natural question: restricting
to the finitely-generated $R$-modules, {\em what are the $K$-groups of
this Waldhausen category}? That leads us to the computations of
Theorem~\ref{main thm computing Gst} and Theorem~\ref{main thm, multiplicative version}.

We remark that our stable $G$-theory Waldhausen category poses an alternative
to a construction by G. Garkusha in \cite{MR1932156}, who constructs a Waldhausen
category which models the cofiber (on the spectrum level) of the Cartan map
from $K$-theory to $G$-theory. Garkusha's construction is a Waldhausen
category structure on chain complexes of $R$-modules, and when $R$ is
quasi-Frobenius, 
our Proposition~\ref{relation between g and k} also describes the cofiber of the Cartan
map but with a much smaller model than Garkusha's 
(the stable $G$-theory Waldhausen category
structure on $R$-modules, rather than on chain complexes of $R$-modules). 
Our stable $G$-theory, as a model for the cofiber of the Cartan map, also 
has the advantage of multiplicative structure, as in 
Proposition~\ref{existence of multiplicative structure on stable g-thy}.

We would not have written this paper or thought about any of these issues
if not for conversations we had with Crichton Ogle, who taught us
a great deal about Waldhausen $K$-theory during the summer of 2012.
We are grateful to C. Ogle for his generosity in teaching us about this subject.

\section{Waldhausen category structures from allowable classes on abelian categories.}

\subsection{Definitions.}

This subsection, mostly consisting of definitions, 
is entirely review and there
are no new results or definitions in it, with the exception of
Definition~\ref{def of retractile monics} and Definition-Proposition~\ref{definition of closures}.

Throughout this subsection, let $\mathcal{C}$ be an 
abelian category.

We begin with the definition of an allowable class. An allowable class is
the structure one needs to specify on $\mathcal{C}$ in order to have a
notion of relative homological algebra in $\mathcal{C}$.
\begin{definition} 
An {\em allowable class in $\mathcal{C}$} consists of a collection 
$E$ of short exact sequences in $\mathcal{C}$ which is closed under isomorphism
of short exact sequences and which contains every short exact sequence
in which at least one object is the zero object of $\mathcal{C}$.
(See section IX.4 of \cite{MR1344215} for this definition
and basic properties.)
\end{definition}
The usual ``absolute'' homological algebra in an abelian category 
$\mathcal{C}$ is recovered by letting the allowable class $E$ consist
of {\em all} short exact sequences in $\mathcal{C}$.

Once one chooses an allowable class $E$, one has the notion of monomorphisms
relative to $E$, or ``$E$-monomorphisms,'' and epimorphisms relative to $E$,
or ``$E$-epimorphisms.''
\begin{definition}
Let $E$ be an allowable class in $\mathcal{C}$.
A monomorphism $f: M\rightarrow N$ 
in $\mathcal{C}$ is called an {\em $E$-monomorphism} or an {\em $E$-monic}
if the short exact sequence
\[ 0\rightarrow M\stackrel{f}{\longrightarrow} N \rightarrow \coker f\rightarrow 0\]
is in $E$.

Dually, an epimorphism $g: M\rightarrow N$ is called an 
{\em $E$-epimorphism} or an {\em $E$-epic} if the short exact sequence
\[ 0\rightarrow \ker f\rightarrow M\stackrel{f}{\longrightarrow} N \rightarrow 0\]
is in $E$.\end{definition}
In the absolute case, the case that $E$ is all short exact sequences in $\mathcal{C}$,
the $E$-monomorphisms are simply the monomorphisms, and the $E$-epimorphisms
are simply the epimorphisms.

Projective and injective objects are at the heart of homological algebra. In 
relative homological algebra, one has the notion of relative projectives, or
$E$-projectives: these are simply the objects which lift over every 
$E$-epimorphism. The $E$-injectives are defined dually.
\begin{definition}
Let $E$ be an allowable class in $\mathcal{C}$.
An object $X$ of $\mathcal{C}$ is said to be an {\em $E$-projective}
if, for every diagram
\[ \xymatrix{ & X \ar[d]  \\
 M \ar[r]^f & N }\]
in which $f$ is an $E$-epic,
there exists a morphism $X\rightarrow M$ making the above diagram commute.

Dually, 
an object $X$ of $\mathcal{C}$ is said to be an {\em $E$-injective}
if, for every diagram
\[ \xymatrix{ M\ar[r]^f\ar[d] & N   \\
X & }\]
in which $f$ is an $E$-monic,
there exists a morphism $N\rightarrow X$ making the above diagram commute.

When the allowable class $E$ is clear from context we sometimes refer to $E$-projectives
and $E$-injectives as {\em relative projectives} and {\em relative injectives,} respectively.
\end{definition}
In the absolute case, the case that $E$ is all short exact sequences 
in $\mathcal{C}$, the $E$-projectives are simply the projectives, and 
the $E$-injectives are simply the injectives.

Once one has a notion of relative projectives, one has a reasonable notion of
a stable equivalence or, loosely, a ``homotopy'' between maps, as studied
(usually in the absolute case, where $E$-projectives are simply projectives) in stable representation theory:
\begin{definition}
Let $E$ be an allowable class in $\mathcal{C}$.
Let $f,g: M\rightarrow N$ be morphisms in $\mathcal{C}$.
We say that $f$ and $g$ are {\em $E$-stably equivalent}
and we write $f\simeq g$
if $f-g$ factors through an $E$-projective object of 
$\mathcal{C}$. 
\end{definition}

One then has the notion of stable equivalence of objects, or loosely,
``homotopy equivalence'':
\begin{definition}\label{def of stable equivalence}
We say that a map $f: M\rightarrow N$ is a
{\em $E$-stable equivalence} if there exists a map
$h: N\rightarrow M$ such that
$f\circ h\simeq \id_N$ and $h\circ f\simeq \id_M$.
\end{definition}
In the absolute case where $E$ consists of all short exact
sequences in $\mathcal{C}$, this is the usual notion of stable
equivalence of modules over a ring. 
Over a Hopf algebra over a field,
stably equivalent modules have isomorphic cohomology in 
positive degrees, so if one is serious about computing
the cohomology of all finitely-generated modules over a
particular Hopf algebra---such as the Steenrod algebra or
the group ring of a Morava stabilizer group---it 
is natural to first compute the 
representation ring modulo stable equivalence.
See \cite{MR738973} for this useful 
perspective (which motivates much of the work in this paper).

We now define the relative-homological-algebraic
generalizations of an abelian categories having
enough projectives or enough injectives. We provide an extra
twist on this definitions as well, which we will need for 
certain theorems: sometimes we will need to know that,
for example, not only does every object embed in an injective,
but that we can choose such embeddings in a functorial way.
\begin{definition} 
Let $E$ be an allowable class in $\mathcal{C}$.
We say that {\em $\mathcal{C}$ has enough $E$-projectives}
if, for any object $M$ of $\mathcal{C}$, there exists an $E$-epic
$N\rightarrow M$ with $N$ an $E$-projective.
We say that {\em $\mathcal{C}$ has functorially enough $E$-projectives}
if $\mathcal{C}$ has enough $E$-projectives and the choice of 
$E$-epimorphisms from $E$-projectives to each object of $\mathcal{C}$
can be made functorially, i.e., there exists a functor
$P: \mathcal{C}\rightarrow\mathcal{C}$ together with a natural transformation
$\epsilon: P\rightarrow \id_{\mathcal{C}}$ such that
$P(X)$ is $E$-projective and $\epsilon(X): P(X)\rightarrow X$ is
an $E$-epimorphism for all objects $X$ of $\mathcal{C}$, and such that,
if $f:X\rightarrow Y$ is an $E$-epimorphism, then so is
$P(f): P(X)\rightarrow P(Y)$.

Dually, we say that {\em $\mathcal{C}$ has enough $E$-injectives}
if, for any object $M$ of $\mathcal{C}$, there exists an $E$-monic
$M\rightarrow N$ with $N$ an $E$-injective.
We say that {\em $\mathcal{C}$ has functorially enough $E$-injectives}
if $\mathcal{C}$ has enough $E$-injectives and the choice of 
$E$-monomorphisms from $E$-injectives to each object of $\mathcal{C}$
can be made functorially, i.e., there exists a functor
$I: \mathcal{C}\rightarrow\mathcal{C}$ together with a natural transformation
$\eta: \id_{\mathcal{C}}\rightarrow I$ such that
$I(X)$ is $E$-injective and $\eta(X): X\rightarrow I(X)$ is
an $E$-monomorphism for all objects $X$ of $\mathcal{C}$, and such that,
if $f:X\rightarrow Y$ is an $E$-monomorphism, then so is
$I(f): I(X)\rightarrow I(Y)$.
\end{definition}
Our need to have abelian categories with {\em functorially}
enough injectives or projectives is only due to Waldhausen's
definitions of cylinder functors and resulting theorems, 
in \cite{MR802796}, demanding that
cylinder functors actually be {\em functors.} It seems
likely that one can do away with this assumption and still 
prove analogues of Waldhausen's results that use cylinders
(e.g. the Fibration Theorem) by mimicking the situation in 
model category theory: there, one knows that any morphism
has a factorization into a cofibration followed by an acylic
fibration, but such factorizations are not provided in a 
functorial way. We don't pursue that angle in this paper, however.

Finally, we have our first definition of a quasi-Frobenius
condition:
\begin{definition}
Let $E$ be an allowable class in $\mathcal{C}$.
We will call $E$ a {\em quasi-Frobenius allowable class} if the 
$E$-projectives are exactly the $E$-injectives.
If the allowable class consisting of all short exact sequences in $\mathcal{C}$
is a quasi-Frobenius class, then we will simply say that
{\em $\mathcal{C}$ is quasi-Frobenius.}
\end{definition}

Here are some important examples of allowable classes in abelian categories:
\begin{itemize}
\item
As described above, the 
usual ``absolute'' homological algebra in an abelian category 
$\mathcal{C}$ is recovered by letting the allowable class $E$ consist
of {\em all} short exact sequences in $\mathcal{C}$; then
the $E$-projectives are the usual projectives, etc. Note that, if
$E$ is an arbitrary allowable class in $\mathcal{C}$, then any
projective object is an $E$-projective object, but the converse is
not necessarily true.
\item 
There is another ``trivial'' case of an allowable class: 
if we let $E$ consist of only the short exact sequences in $\mathcal{C}$ in which
at least one of the objects is the zero object, then the $E$-epics
consists of all identity maps as well as all projections to the zero object,
and the $E$-monics consist of all identity maps as well as all inclusions
of the zero object. As a consequence all objects are both $E$-injectives 
and $E$-projectives, and $E$ is a quasi-Frobenius allowable class.
\item 
Suppose $\mathcal{C},\mathcal{D}$ are abelian categories and
$F: \mathcal{C}\rightarrow\mathcal{D}$ is an additive functor. 
Then we can let $E$ be the allowable class in $\mathcal{C}$
consisting of the short exact sequences which are sent by $F$ to split short exact sequences in $\mathcal{D}$. 
If $F$ has a left (resp. right) adjoint $G$ then
objects of $\mathcal{C}$ of the form $GFX$ (resp. $FGX$) are $E$-projectives 
(resp. $E$-injectives) and the counit map $GFX\rightarrow X$ of
the comonad $GF$ (resp. the unit map $X\rightarrow GFX$ of the monad $GF$)
is an $E$-epic (resp. $E$-monic), hence $\mathcal{C}$ has
enough $E$-projectives (resp. enough $E$-injectives). 
These ideas are in \cite{MR1344215}.

For example, if $R$ is a ring and $\mathcal{C}$ the category of $R$-modules and
$\mathcal{D}$ the category of abelian groups, and $F$ the forgetful functor,
then $E$ is the class of short exact sequences of $R$-modules whose underlying
short exact sequences of abelian groups are split. The $R$-modules of the form
$R\otimes_{\mathbb{Z}} M$, for $M$ an $R$-module, are $E$-projectives. 
\end{itemize}

Here is a new definition which makes many arguments involving allowable classes substantially smoother:
\begin{definition} \label{def of retractile monics}
An allowable class $E$ is said to 
{\em have retractile monics}
if, whenever $g\circ f$ is an $E$-monic, $f$ is also an $E$-monic.

Dually, an allowable class $E$ is said to 
{\em have sectile epics}
if, whenever $g\circ f$ is an $E$-epic, $g$ is also an $E$-epic.
\end{definition}

The utility of the notion of ``having sectile epics'' comes from the following fundamental theorem of
relative homological algebra, due to Heller (see \cite{MR1344215}), whose statement is slightly cleaner is one is willing to use the phrase ``having sectile epics.'' The consequence
of Heller's theorem is that, in order to specify a ``reasonable'' allowable class
in an abelian category, it suffices to specify the relative projective objects
associated to it.
\begin{theorem}\label{heller's theorem}
If $\mathcal{C}$ is an abelian category and $E$ is an allowable class in $\mathcal{C}$ with sectile epics and enough $E$-projectives,
then an epimorphism $M\rightarrow N$ in $\mathcal{C}$ is an $E$-epic if and
only if the induced map $\hom_{\mathcal{C}}(P, M)\rightarrow 
\hom_{\mathcal{C}}(P, N)$ of abelian groups is an epimorphism for all
$E$-projectives $P$.
\end{theorem}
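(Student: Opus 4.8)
The plan is to prove the two implications separately, with essentially all of the (very little) content residing in the second one.

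For the ``only if'' direction, suppose $g\colon M\rightarrow N$ is an $E$-epic and $P$ is an $E$-projective. Given any morphism $h\colon P\rightarrow N$, I would simply invoke the defining lifting property of $E$-projectives, applied to the $E$-epic $g$ and the map $h$: this produces $\tilde{h}\colon P\rightarrow M$ with $g\circ\tilde{h}=h$. Hence $\hom_{\mathcal{C}}(P,g)\colon\hom_{\mathcal{C}}(P,M)\rightarrow\hom_{\mathcal{C}}(P,N)$ is surjective. This step uses nothing beyond the definitions and needs neither ``enough $E$-projectives'' nor ``sectile epics.''

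For the ``if'' direction, suppose $g\colon M\rightarrow N$ is an epimorphism for which $\hom_{\mathcal{C}}(P,g)$ is surjective for every $E$-projective $P$. First I would use the hypothesis that $\mathcal{C}$ has enough $E$-projectives to choose an $E$-epic $q\colon Q\rightarrow N$ with $Q$ an $E$-projective. Since $\hom_{\mathcal{C}}(Q,g)$ is surjective and $q\in\hom_{\mathcal{C}}(Q,N)$, the map $q$ lifts along $g$: there is $\tilde{q}\colon Q\rightarrow M$ with $g\circ\tilde{q}=q$. Now $g\circ\tilde{q}=q$ is an $E$-epic, so the ``sectile epics'' hypothesis --- whenever $g\circ f$ is an $E$-epic, $g$ is an $E$-epic --- applies immediately and yields that $g$ itself is an $E$-epic, completing the proof.

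The argument is short once the right hypotheses are in hand, so the ``main obstacle'' here is conceptual rather than technical: the point is to resist the temptation to prove the ``if'' direction directly, e.g.\ by trying to split $0\rightarrow\ker g\rightarrow M\rightarrow N\rightarrow 0$ after applying some functor, or by running a dimension-shifting / comparison-of-resolutions argument. The content of the statement, which is exactly what phrasing it via ``sectile epics'' is meant to isolate, is that ``enough $E$-projectives'' (one may probe $N$ by a single $E$-projective via an $E$-epic) together with ``sectile epics'' (which propagates $E$-epicness backwards along a factorization) reduces everything to one lift followed by one cancellation. I would also note in passing that the epimorphism hypothesis on $g$ is not really needed in the ``if'' direction, since $g\circ\tilde{q}=q$ is already epic; it is retained only to match the form of the classical statement.
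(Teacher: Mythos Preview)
Your proof is correct and is the standard argument. Note, however, that the paper does not actually supply a proof of this theorem: it attributes the result to Heller and refers the reader to Mac~Lane's \emph{Homology} for the argument. So there is no ``paper's own proof'' to compare against; what you have written is essentially the proof one finds in the cited reference, and it is exactly the argument the phrasing via ``sectile epics'' is designed to make transparent. Your closing remark about the redundancy of the epimorphism hypothesis in the ``if'' direction is also correct, since the sectile-epics axiom already forces $g$ to be an $E$-epic (hence in particular epic) once $g\circ\tilde q$ is.
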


Heller's theorem suggests the following 
construction, which as far as we know, is new 
(but unsurprising): 
if $E$ is an allowable class, we can construct a
``sectile closure of $E$'' which
has the same relative projectives and the same
stable equivalences but which has sectile 
epics. Here are the specific properties
of this construction (we have neglected to
write out proofs of these properties 
because the proofs are so elementary):
\begin{definition-proposition}\label{definition of closures}
Let $\mathcal{C}$ be an abelian category,
$E$ an allowable class in $\mathcal{C}$.
Let $E_{sc}$ be the allowable class
in $\mathcal{C}$ consisting of 
the exact sequences
\[ X \rightarrow Y \rightarrow Y/X\]
such that the induced map
\[ \hom_{\mathcal{C}}(P, Y)\rightarrow
\hom_{\mathcal{C}}(P, Y/X)\]
is a surjection of abelian groups for every
$E$-projective $P$. We call
$E_{sc}$ the {\em sectile closure} of $E$.
The allowable class $E_{sc}$ has the following
properties:
\begin{itemize}
\item
$E_{sc}$ has sectile epics.
\item 
An object of $\mathcal{C}$ is an $E$-projective 
if and only if it is an
$E_{sc}$-projective.
\item 
If $f,g$ are two morphisms in $\mathcal{C}$ then
$f$ and $g$ are $E$-stably equivalent
if and only if they are $E_{sc}$-stably equivalent.
\item 
If $X,Y$ are two objects in $\mathcal{C}$
then $X$ and $Y$ are $E$-stably equivalent
if and only if they are $E_{sc}$-stably equivalent.
\item 
$(E_{sc})_{sc} = E_{sc}$.
\item
If $E,F$ are two allowable classes in $\mathcal{C}$
and $F\subseteq E$ then $F_{sc}\subseteq E_{sc}$.
\end{itemize}
\end{definition-proposition}

Of course there is a construction dual
to the sectile closure,
a {\em retractile closure}, with dual properties,
but with a less straightforward relationship
to stable equivalence, since stable equivalence
is defined in terms of projectives, not injectives.

We now recall Waldhausen's definitions:
\begin{definition} \label{def of waldhausen cat}
A pointed category $\mathcal{C}$ with finite pushouts equipped with a specified class of cofibrations and a specified class of weak equivalences, both closed under composition, is called a {\em Waldhausen
category} if
the following axioms are satisfied:
\begin{itemize}
\item {\bf (Cof 1.)} The isomorphisms in $\mathcal{C}$
are cofibrations.
\item {\bf (Cof 2.)} For every object $X$ of
$\mathcal{C}$, the map $\pt \rightarrow \mathcal{C}$
is a cofibration. (We write $\pt$ for the zero
object of $\mathcal{C}$.)
\item {\bf (Cof 3.)} If $X\rightarrow Y$ is a morphism
in $\mathcal{C}$ and $X \rightarrow Z$ is a cofibration,
then the canonical map $Y\rightarrow Y\coprod_X Z$
is a cofibration.
\item {\bf (Weq 1.)} The isomorphisms in $\mathcal{C}$
are weak equivalences.
\item {\bf (Weq 2.)} If 
\begin{equation}\label{weq 2 diagram}  \xymatrix{ 
Y \ar[d] & X \ar[l]\ar[r]\ar[d] & Z\ar[d] \\
Y^{\prime} & X^{\prime} \ar[l]\ar[r] & Z^{\prime} }\end{equation}
is a commutative diagram in $\mathcal{C}$
in which the maps $X\rightarrow Y$ and
$X^{\prime}\rightarrow Y^{\prime}$ are
cofibrations and all three vertical maps are
weak equivalences, then the induced map
$Y\coprod_X Z\rightarrow Y^{\prime}\coprod_{X^{\prime}} Z^{\prime}$
is a weak equivalence.
\end{itemize}
\end{definition}
Ultimately, if $\mathcal{C}$ is a Waldhausen category, then what one typically
wants to understand is $\left| wS_{\cdot} \mathcal{C}\right|$, the 
geometric realization of the simplicial category 
$wS_{\cdot}\mathcal{C}$ constructed by Waldhausen in \cite{MR802796}.
The $K$-groups of $\mathcal{C}$ are defined as the homotopy groups of the
loop space $\Omega \left| wS_{\cdot} \mathcal{C}\right|$:
\begin{eqnarray*} \pi_{n+1}(\left| wS_{\cdot} \mathcal{C}\right|) & \cong &
 \pi_n(\Omega \left| wS_{\cdot} \mathcal{C}\right|) \\
 & \cong & K_n(\mathcal{C}).\end{eqnarray*}

If $\mathcal{C}$ is a Waldhausen category then
the following axioms may or may not be satisfied:
\begin{definition}\label{def of sat and ext axioms}\begin{itemize}
\item {\bf (Saturation axiom.)} If $f,g$ are composable
maps in $\mathcal{C}$ and two of $f,g,g\circ f$
are weak equivalences then so is the third.
\item {\bf (Extension axiom.)} If 
\[ \xymatrix{ 
X \ar[r]\ar[d] & Y \ar[r]\ar[d] & Y/X\ar[d] \\
X^{\prime} \ar[r] & Y^{\prime} \ar[r] & 
 Y^{\prime}/X^{\prime} }\]
is a map of cofiber sequences and
the maps $X\rightarrow X^{\prime}$
and $Y/X\rightarrow Y^{\prime}/X^{\prime}$
are weak equivalences then so is
the map $Y\rightarrow Y^{\prime}$.
\end{itemize}\end{definition}

\begin{definition} 
If $\mathcal{C}$ is a Waldhausen category,
a {\em cylinder functor on $\mathcal{C}$} is a functor
from the category of arrows $f: X\rightarrow Y$
in $\mathcal{C}$ to
the category of diagrams of the form
\[\xymatrix{
X\ar[r]^{j_1} \ar[rd]^f & T(f) \ar[d]^p & Y\ar[l]_{j_2}\ar[ld]^{\id} \\
 & Y & 
}\] 
in $\mathcal{C}$ satisfying the three conditions:
\begin{itemize}
\item {\bf (Cyl 1.)} 
If 
\[ \xymatrix{ X^{\prime}\ar[r]^{f^{\prime}} \ar[d] & Y^{\prime} \ar[d] \\
 X\ar[r]^f & Y}\]
is a commutative diagram in $\mathcal{C}$ in which
the vertical maps are weak equivalences (resp. cofibrations),
then the map $T(f^{\prime})\rightarrow T(f)$ is a weak equivalence (resp. cofibration
and 
\[ X \coprod_{X^{\prime}} T(f^{\prime}) \coprod_{Y^{\prime}} Y \rightarrow T(f)\]
is a cofibration).
\item {\bf (Cyl 2.)} 
$T(\pt\rightarrow Y) = Y$ and the maps $p$ and $j_2$
are the identity map on $Y$.
\item {\bf (Cyl 3.)}
The map $j_1\coprod j_2: A\coprod B \rightarrow T(f)$ is a cofibration.
\end{itemize}
A Waldhausen category with cylinder functor
may or may not satisfy the additional
condition:
\begin{itemize}
\item {\bf (Cylinder axiom.)} 
For any map $f$ in $\mathcal{C}$,
the map $p$ is a weak equivalence.
\end{itemize}
\end{definition}
The idea here is that a cylinder functor satisfying the cylinder axiom acts
very much like the mapping cylinder construction from classical homotopy 
theory---or, more generally, like fibrant replacement in a model category.
Some of Waldhausen's most powerful results in \cite{MR802796}
have proofs of a sufficiently homotopy-theoretic flavor that they require
that every Waldhausen category in sight has a cylinder functor obeying
the cylinder axioms. A good example of this is Waldhausen's Fibration 
Theorem, which we now recall:
\begin{theorem} {\bf Fibration Theorem (Waldhausen).}  \label{fibration thm}
Suppose $\mathcal{C},\mathcal{C}_0$ are Waldhausen categories
with the same underlying category and the same underlying class of cofibrations.
Suppose all of the following conditions are satisfied:
\begin{itemize}
\item Every weak equivalence in $\mathcal{C}$ is also
a weak equivalence in $\mathcal{C}_0$.
\item $\mathcal{C}_0$ admits a cylinder functor satisfying the cylinder axiom.
\item The weak equivalences in $\mathcal{C}_0$ 
satisfy the saturation and extension axioms.
\end{itemize}
Then
\[ \left| wS_{\cdot}\mathcal{X}\right| \rightarrow \left| wS_{\cdot}\mathcal{C}\right| 
\rightarrow \left| wS_{\cdot}\mathcal{C}_0\right| \]
is a homotopy fibre sequence, where $\mathcal{X}$ is the full 
sub-Waldhausen-category of $\mathcal{C}$ generated by the objects that are
weakly equivalent to $\pt$ in $\mathcal{C}_0$.
As a consequence, after looping and taking homotopy groups, we get the long
exact sequence of $K$-groups:
\[ \dots \rightarrow K_{n+1}(\mathcal{C}_0)\rightarrow K_n(\mathcal{X}) \rightarrow
K_n(\mathcal{C})\rightarrow K_n(\mathcal{C}_0)\rightarrow K_{n-1}(\mathcal{X})
\rightarrow \dots .\]
\end{theorem}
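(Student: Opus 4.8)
This is Theorem~1.6.4 of Waldhausen's \cite{MR802796}, recorded here in the form in which we shall apply it; the argument is entirely Waldhausen's, so I will only indicate its shape and refer to \cite{MR802796} for the details. Write $v$ for the class of weak equivalences of $\mathcal{C}$ and $w$ for the larger class of weak equivalences of $\mathcal{C}_0$, so that $\mathcal{X}$ is the category of $w$-acyclic objects, equipped with the cofibrations of $\mathcal{C}$ and the weak equivalences $v$. (One checks first, using the gluing axiom (Weq~2) for $\mathcal{C}_0$ together with the fact that $v$-equivalences are $w$-equivalences, that $\mathcal{X}$ really is closed under the relevant pushouts and hence is a sub-Waldhausen-category of $\mathcal{C}$.) The goal is to identify $\left|wS_{\cdot}\mathcal{X}\right|$ with the homotopy fibre of the map $\left|wS_{\cdot}\mathcal{C}\right|\rightarrow\left|wS_{\cdot}\mathcal{C}_0\right|$ that relaxes the weak equivalences from $v$ to $w$; the long exact sequence of $K$-groups is then the long exact sequence of that fibration, after looping.

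The plan has two steps. First, one shows that the square
\[ \xymatrix{ \left|vS_{\cdot}\mathcal{X}\right| \ar[r]\ar[d] & \left|vS_{\cdot}\mathcal{C}\right| \ar[d] \\ \left|wS_{\cdot}\mathcal{X}\right| \ar[r] & \left|wS_{\cdot}\mathcal{C}_0\right| } \]
(in which the lower left corner now carries the $w$-equivalences) is homotopy cartesian. Granting this, and noting that $\left|wS_{\cdot}\mathcal{C}_0\right|$ is connected, the theorem reduces to the second step: showing $\left|wS_{\cdot}\mathcal{X}\right|$ is contractible. For the second step, apply the cylinder functor to the arrow $X\rightarrow\pt$, where $X$ runs over $\mathcal{X}$: the cylinder axiom for $w$ makes $p\colon T(X\rightarrow\pt)\rightarrow\pt$ a $w$-equivalence, so $T(X\rightarrow\pt)$ is again $w$-acyclic, and since $X$ itself is $w$-acyclic the saturation axiom for $w$ forces both $j_1\colon X\rightarrow T(X\rightarrow\pt)$ and $j_2\colon\pt\rightarrow T(X\rightarrow\pt)$ to be $w$-equivalences. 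By (Cyl~1) and (Cyl~2) these are natural in $X$, so $X\mapsto T(X\rightarrow\pt)$ is a functor $\mathcal{X}\rightarrow\mathcal{X}$ connected to both the identity functor and the constant functor at $\pt$ through natural $w$-equivalences; this is the standard criterion for $\left|wS_{\cdot}\mathcal{X}\right|$ to be contractible.

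The main obstacle is the first step: the homotopy cartesianness of the square is the real content of the Fibration Theorem, and it is where all of the hypotheses are consumed. Waldhausen proves it in \cite{MR802796} by combining the Additivity Theorem with a realization lemma for simplicial spaces --- to the effect that a map of simplicial spaces which is a homotopy fibration in each simplicial degree, and whose base satisfies an appropriate Kan-type condition, realizes to a homotopy fibration. The extension and saturation axioms for $w$, and the requirement that one and the same cylinder functor serve as a cylinder functor for both $\mathcal{C}$ and $\mathcal{C}_0$, are exactly what is needed to verify the degreewise hypotheses of that lemma; the hypothesis that $\mathcal{C}$ and $\mathcal{C}_0$ have the same underlying category and the same cofibrations is what makes the two $S_{\cdot}$-constructions comparable degree by degree in the first place. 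I would not reproduce this argument, but would cite \cite{MR802796} (or a standard modern exposition of Waldhausen $K$-theory) for it.
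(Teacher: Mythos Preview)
The paper does not give a proof of this theorem at all; it is recorded as Waldhausen's result and cited from \cite{MR802796} without further argument. Your proposal does the same---cite Waldhausen---while adding a correct high-level sketch of his two-step argument (homotopy-cartesianness of the $(v,w)$-square plus contractibility of $\left|wS_{\cdot}\mathcal{X}\right|$ via the cone construction), so there is nothing to compare and your treatment is, if anything, more informative than the paper's.
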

The question of when our Waldhausen categories given by allowable classes
on abelian categories satisfy the required conditions for the Fibration Theorem
to hold is the subject of most of this paper.

\subsection{The Waldhausen category structure on an abelian category associated to a pair of allowable classes.}

In this subsection,
we will prove that an abelian category equipped with a pair of allowable 
classes $E,F$ admits a Waldhausen category structure in which 
the cofibrations are the $F$-monomorphisms and the 
weak equivalences are the $E$-stable equivalences. In this subsection we will
also find conditions under which this Waldhausen category satisfies
the extension and saturation axioms (see Definition~\ref{def of sat and ext axioms}
for definitions of these axioms). To get anywhere, we will need some lemmas:

\begin{lemma}\label{pullback of surjections is surjective}
A pullback of a surjective map of abelian groups is surjective.
\end{lemma}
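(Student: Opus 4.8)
The plan is to use the explicit construction of pullbacks in the category of abelian groups and then chase elements directly. Suppose we are given a surjection $f\colon A\rightarrow B$ of abelian groups and an arbitrary homomorphism $g\colon C\rightarrow B$; the pullback $A\times_B C$ is the subgroup of $A\oplus C$ consisting of those pairs $(a,c)$ with $f(a)=g(c)$, and the map whose surjectivity we must verify is the projection $\pi\colon A\times_B C\rightarrow C$ sending $(a,c)\mapsto c$.

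First I would fix an arbitrary element $c\in C$ and observe that $g(c)$ is an element of $B$. Since $f$ is surjective, I can choose $a\in A$ with $f(a)=g(c)$. Then by construction the pair $(a,c)$ lies in $A\times_B C$, and $\pi(a,c)=c$. As $c$ was arbitrary, $\pi$ is surjective, which is exactly the assertion (pullbacks being unique up to canonical isomorphism, it does not matter which concrete model one uses).

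There is essentially no obstacle here: the only mild point worth noting is that one should be clear about which leg of the pullback square is meant to be ``the pullback of the surjection''—namely the map parallel to $f$, i.e.\ $\pi$—and that the statement is really a statement about the category of abelian groups as opposed to an arbitrary abelian category, so that the element-theoretic argument above is legitimate. (In a general abelian category one would instead argue that $\pi$ is an epimorphism because its cokernel is isomorphic to the cokernel of $f$, using that pullback squares of epimorphisms are also pushout squares; but for abelian groups the direct element chase is the cleanest route and is all that is needed for the later lemmas.)
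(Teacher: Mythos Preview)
Your argument is correct. The paper's proof takes a slightly different (more categorical) route: it observes that the forgetful functor from abelian groups to sets is a right adjoint, hence preserves limits, and also preserves surjections; this reduces the statement to the claim that a pullback of a surjection of \emph{sets} is surjective, which the paper then leaves as an elementary exercise. Your direct element chase is precisely that elementary exercise, carried out in abelian groups rather than in sets---which works identically, since the underlying-set description of the pullback is the same. So the two proofs have the same content; the paper's version just inserts an extra layer of abstraction to make explicit why working elementwise is legitimate.
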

\begin{proof}
The forgetful functor from abelian groups to sets is a right adjoint, hence preserves limits.
It also clearly preserves surjections.
So the lemma is true if a pullback of a surjective maps of sets is surjective, which is 
an elementary exercise.
\end{proof}

\begin{lemma} 
\label{when admissible monics are closed under pushout}
Let $\mathcal{C}$ be an abelian category and let
$E$ be an allowable class with retractile
monics. Then $E$-monics
are closed under pushout in $\mathcal{C}$. That is,
if $X\rightarrow Z$ is an $E$-monic and $X\rightarrow Y$ is any morphism
in $\mathcal{C}$, then the canonical
map $Y\rightarrow Y\coprod_X Z$ is
an $E$-monic.
\end{lemma}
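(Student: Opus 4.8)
The plan is to present the pushout as an explicit cokernel inside a biproduct, use the retractile monics hypothesis to identify two short exact sequences that lie in $E$, and then extract the assertion from a $3\times 3$ diagram built out of them.

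Concretely, let $i\colon X\to Z$ be an $E$-monic and $f\colon X\to Y$ an arbitrary morphism. Form the morphism $\alpha=\binom{i}{-f}\colon X\to Z\oplus Y$; its composite with the projection $\mathrm{pr}_Z\colon Z\oplus Y\to Z$ is $i$, so $\alpha$ is monic. Let $q\colon Z\oplus Y\to P$ be its cokernel, and write $\iota_Z,\iota_Y$ for the biproduct inclusions, so that $P=Y\coprod_XZ$, the canonical map $Y\to P$ is $\bar f:=q\iota_Y$, the canonical map $Z\to P$ is $\bar i:=q\iota_Z$, and, since a pushout of a monomorphism is a monomorphism with the same cokernel, $\bar f$ is monic with $\coker\bar f$ canonically identified with $C:=\coker i$. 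Thus it suffices to prove that the short exact sequence $(\mathrm{III})\colon 0\to Y\xrightarrow{\bar f}P\xrightarrow{p}C\to 0$ lies in $E$, where $p$ is the cokernel map of $\bar f$.

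First I record two sequences that already lie in $E$. Since $\mathrm{pr}_Z\circ\alpha=i$ is an $E$-monic, the retractile monics hypothesis gives that $\alpha$ is an $E$-monic, i.e. $(\mathrm{II})\colon 0\to X\xrightarrow{\alpha}Z\oplus Y\xrightarrow{q}P\to 0$ lies in $E$. Since $\iota_Y\colon Y\to Z\oplus Y$ is a split monomorphism and $\mathrm{pr}_Y\circ\iota_Y=\id_Y$ is an $E$-monic, the same hypothesis gives that $\iota_Y$ is an $E$-monic, so the split sequence $(\mathrm{II}')\colon 0\to Y\xrightarrow{\iota_Y}Z\oplus Y\xrightarrow{\mathrm{pr}_Z}Z\to 0$ lies in $E$ as well.

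Now I would write down the commutative $3\times 3$ diagram whose rows are the trivial sequence $0\to 0\to Y\xrightarrow{=}Y\to 0$, the sequence $(\mathrm{II})$, and the given sequence $(\mathrm{I})\colon 0\to X\xrightarrow{i}Z\xrightarrow{\pi}C\to 0$, and whose columns are the trivial sequence $0\to 0\to X\xrightarrow{=}X\to 0$, the sequence $(\mathrm{II}')$, and the target sequence $(\mathrm{III})$; commutativity of the three nontrivial squares is exactly the identities $\bar f=q\iota_Y$, $\mathrm{pr}_Z\alpha=i$, and $pq=\pi\,\mathrm{pr}_Z$, all of which hold by construction. Every row and column of this diagram except $(\mathrm{III})$ lies in $E$ (two are trivial, two are $(\mathrm{II})$ and $(\mathrm{II}')$, and the bottom row is the hypothesis), and the conclusion should come from chasing this diagram: the key point is that the sub-object map $\bar f$ of $(\mathrm{III})$ equals the composite of the $E$-monic $\iota_Y$ with the $E$-epic $q$, and one must leverage the surrounding $E$-sequences, via the retractile monics property, to upgrade this to the statement that $\bar f$ is an $E$-monic, i.e. $(\mathrm{III})\in E$. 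This last diagram chase is where the hypothesis does the essential work, and it is the step I expect to require the most care; granting it, $(\mathrm{III})\in E$ is by definition the assertion that the canonical map $Y\to Y\coprod_XZ$ is an $E$-monic.
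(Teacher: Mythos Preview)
Your setup is fine: the sequences $(\mathrm{II})$ and $(\mathrm{II}')$ do lie in $E$ for exactly the reasons you give. The gap is the step you yourself flag. You write that the conclusion ``should come from chasing this diagram'' and that you will ``grant'' the final diagram chase, but that chase is the entire content of the lemma, and it cannot be carried out with the tools you have assembled. Knowing that $\bar f$ factors as an $E$-monic followed by an $E$-epic does not make $\bar f$ an $E$-monic, and there is no $3\times 3$ lemma available for a bare allowable class with retractile monics: the standard $3\times 3$ lemmas live in Quillen exact categories, and one of the exact-category axioms is precisely that admissible monics are closed under pushout---the statement you are trying to prove. Concretely, to use the retractile-monic hypothesis on $\bar f$ you would need a map $g$ out of $P$ with $g\bar f$ already known to be an $E$-monic, and your diagram supplies no such $g$ (the only map out of $P$ is $p$, with $p\bar f=0$).

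The paper's argument supplies the missing idea. Because $E$ has retractile monics, $E$ coincides with its retractile closure, so a monic $m$ is an $E$-monic if and only if $\hom_{\mathcal C}(-,I)$ sends $m$ to a surjection of abelian groups for every $E$-injective $I$. Apply $\hom_{\mathcal C}(-,I)$ to the pushout square
\[
\xymatrix{ X\ar[r]^{i}\ar[d]_{f} & Z\ar[d]^{\bar i}\\ Y\ar[r]_{\bar f} & Y\coprod_X Z }
\]
to obtain a \emph{pullback} square of abelian groups (this is the universal property of the pushout). The top map $\hom(Z,I)\to\hom(X,I)$ is surjective because $i$ is an $E$-monic, and a pullback of a surjection of abelian groups is a surjection, so $\hom(Y\coprod_X Z,I)\to\hom(Y,I)$ is surjective. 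Since this holds for every $E$-injective $I$, the retractile-closure characterization gives that $\bar f$ is an $E$-monic. This is the mechanism by which the retractile-monic hypothesis actually does its work; your $3\times 3$ scaffolding, while correct, does not reach it.
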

\begin{proof}
Suppose $f: X\rightarrow Z$ is an $E$-monic and $X\rightarrow Y$ any morphism. 
We have the commutative diagram with exact rows
\[ \xymatrix{
0 \ar[r] & 
 X \ar[r]^f \ar[d] & 
 Z \ar[r]\ar[d] &
 \coker f \ar[r] \ar[d] &
 0 \ar[d] \\
 &
Y \ar[r] &
Y\coprod_X Z \ar[r] &
\coker f \ar[r] & 
 0 }\]
and hence, for every $E$-injective $I$, 
the induced commutative diagram
of abelian groups
\[ \xymatrix{
0 \ar[r] \ar[d] & 
 \hom_{\mathcal{C}}(\coker f, I) \ar[r] &
 \hom_{\mathcal{C}}(Z, I) \ar[r] &
 \hom_{\mathcal{C}}(X, I) \ar[r] &
 0 \\
0 \ar[r]  & 
 \hom_{\mathcal{C}}(\coker f, I) \ar[r]\ar[u]^{\cong} &
 \hom_{\mathcal{C}}(Y\coprod_X Z, I) \ar[r]\ar[u] &
 \hom_{\mathcal{C}}(Y, I). \ar[u] & }\]
Exactness of the top row follows from $f$
being an $E$-monic together with
$E$ having retractile monics, hence
$E$ is its own retractile closure, hence 
$E$-monics are precisely the
maps which induce a surjection after applying
$\hom_{\mathcal{C}}(- ,I)$ for every $E$-injective $I$.
Now in particular we have a commutative square
in the above commutative diagram:
\[ 
\xymatrix{
 \hom_{\mathcal{C}}(Z, I) \ar[r] &
 \hom_{\mathcal{C}}(X, I)  \\
 \hom_{\mathcal{C}}(Y\coprod_X Z, I) \ar[r]\ar[u] &
 \hom_{\mathcal{C}}(Y, I), \ar[u]
}\]
which is a pullback square of abelian groups,
by the universal property of the pushout.
The top map in the square is a surjection,
hence so is the bottom map, Lemma~\ref{pullback of surjections is surjective}.
So $\hom_{\mathcal{C}}(Y\coprod_X Z, I)
\rightarrow \hom_{\mathcal{C}}(Y, I)$
is a surjection for every $E$-injective $I$. Again since
$E$ is its own retractile closure, this implies
that $Y\rightarrow Y\coprod_X Z$ is an $E$-monic.
\end{proof}

\begin{lemma}\label{composite of E-monics is E-monics}
Suppose $\mathcal{C}$ is an abelian category, $E$ an allowable class in $\mathcal{C}$ with retractile monics.
Suppose $\mathcal{C}$ has enough $E$-injectives.
A composite of $E$-monomorphisms is an $E$-monomorphism.
\end{lemma}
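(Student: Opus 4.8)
The plan is to use Heller's characterization of $E$-monics via relative injectives, exactly as in the proof of Lemma~\ref{when admissible monics are closed under pushout}. Since $E$ has retractile monics, $E$ is its own retractile closure, so by the dual of Def.-Prop.~\ref{definition of closures} (applied to the retractile closure), a monomorphism $g\colon M\rightarrow N$ in $\mathcal{C}$ is an $E$-monic if and only if the induced map $\hom_{\mathcal{C}}(N,I)\rightarrow\hom_{\mathcal{C}}(M,I)$ is a surjection of abelian groups for every $E$-injective $I$. (This is where the hypothesis that $\mathcal{C}$ has enough $E$-injectives is used: it is the dual of Heller's Thm.~\ref{heller's theorem}, guaranteeing that the relative injectives detect $E$-monics.)

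Given this, the argument is immediate. Suppose $f\colon X\rightarrow Y$ and $g\colon Y\rightarrow Z$ are $E$-monomorphisms. For any $E$-injective $I$, applying $\hom_{\mathcal{C}}(-,I)$ gives maps
\[ \hom_{\mathcal{C}}(Z,I)\xrightarrow{g^{*}}\hom_{\mathcal{C}}(Y,I)\xrightarrow{f^{*}}\hom_{\mathcal{C}}(X,I), \]
and both $g^{*}$ and $f^{*}$ are surjections by the characterization above. Hence their composite $(g\circ f)^{*}=f^{*}\circ g^{*}$ is a surjection of abelian groups. Also $g\circ f$ is a monomorphism, being a composite of monomorphisms in an abelian category. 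Therefore, again by the characterization above, $g\circ f$ is an $E$-monic.

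I expect the only real subtlety to be making sure the dual form of Heller's theorem actually applies: Heller's Thm.~\ref{heller's theorem} as stated requires sectile epics and enough $E$-projectives, so the dualization requires retractile monics and enough $E$-injectives, which is precisely what we have hypothesized. Once that bookkeeping is in place, there is no further obstacle—the statement reduces to the trivial fact that a composite of surjections of abelian groups is a surjection. One could alternatively give a direct diagram-chase using the snake-lemma-style commutative diagram relating $\coker f$, $\coker g$, and $\coker(g\circ f)$, but routing everything through relative injectives keeps the argument uniform with the preceding lemma and avoids checking the $E$-membership of the relevant short exact sequences by hand.
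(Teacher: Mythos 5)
Your proposal is correct and is essentially the paper's own proof: both reduce the statement, via the dual of Heller's theorem (valid here because $E$ has retractile monics and $\mathcal{C}$ has enough $E$-injectives), to the fact that a composite of surjections $\hom_{\mathcal{C}}(Z,I)\rightarrow\hom_{\mathcal{C}}(Y,I)\rightarrow\hom_{\mathcal{C}}(X,I)$ is a surjection for every $E$-injective $I$. The paper merely phrases this as ``every map from $X$ to an $E$-injective extends through $g\circ f$,'' which is the same statement.
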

\begin{proof}
Let $f:X\rightarrow Y$ and $g: Y \rightarrow Z$ be $E$-monomorphisms. Let $I$ be an $E$-injective object equipped with a map
$X \rightarrow I$. Then, since $f$ is an $E$-monomorphism, $X\rightarrow I$ extends through $f$ to a map $Y \rightarrow I$, which in turn extends through
$g$ since $g$ is an $E$-monomorphism. So every map to an $E$-injective from $Z$ extends through $g\circ f$. Now, by the dual of 
Heller's theorem~\ref{heller's theorem}, $g\circ f$ is an $E$-monomorphism.
\end{proof}

\begin{lemma} \label{composite of stable equivalences is a stable equivalence}
Let $\mathcal{C}$ be an abelian category and 
let $E$ be an 
allowable class in $\mathcal{C}$.
Then a composite of two $E$-stable equivalences in $\mathcal{C}$ is
an $E$-stable equivalence.
\end{lemma}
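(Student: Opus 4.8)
The plan is to produce an explicit stable homotopy inverse for the composite. Suppose $f\colon M\to N$ and $g\colon N\to P$ are $E$-stable equivalences, and choose $h\colon N\to M$ with $f\circ h\simeq\id_N$, $h\circ f\simeq\id_M$, and $k\colon P\to N$ with $g\circ k\simeq\id_P$, $k\circ g\simeq\id_N$. I claim that $h\circ k\colon P\to M$ witnesses, via Def.~\ref{def of stable equivalence}, that $g\circ f$ is an $E$-stable equivalence; so the whole content of the lemma is to verify $(g\circ f)\circ(h\circ k)\simeq\id_P$ and $(h\circ k)\circ(g\circ f)\simeq\id_M$.

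Before the computation I would record the elementary structural fact on which everything rests: the class of morphisms in $\mathcal{C}$ that factor through an $E$-projective object forms a two-sided ideal. Closure under precomposition and postcomposition with arbitrary morphisms is immediate from the definition; closure under addition follows because a finite direct sum of $E$-projectives is again $E$-projective (given an $E$-epic $A\to B$ and a map $P_1\oplus P_2\to B$, lift the two restrictions $P_i\to B$ separately using the $E$-projectivity of each $P_i$, and take the induced map out of the direct sum). In particular $\simeq$ is an equivalence relation on each hom-set which is compatible with composition on both sides.

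Granting this, the verification is a one-line diagram chase. Writing $f\circ h=\id_N+\theta$ with $\theta$ factoring through an $E$-projective, we get $(g\circ f)\circ(h\circ k)=g\circ(f\circ h)\circ k=g\circ k+g\circ\theta\circ k$; the term $g\circ\theta\circ k$ lies in the ideal, so $(g\circ f)\circ(h\circ k)\simeq g\circ k\simeq\id_P$, and transitivity of $\simeq$ (i.e.\ closure of the ideal under addition) gives $(g\circ f)\circ(h\circ k)\simeq\id_P$. The symmetric argument, writing $k\circ g=\id_N+\theta'$, yields $(h\circ k)\circ(g\circ f)=h\circ(k\circ g)\circ f=h\circ f+h\circ\theta'\circ f\simeq h\circ f\simeq\id_M$.

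I do not anticipate a real obstacle: the only point requiring any attention is the closure of the ideal of ``maps factoring through an $E$-projective'' under addition, equivalently the closure of $E$-projectives under finite direct sums, and this is the step I would state explicitly since the remainder is routine. Alternatively, one can phrase the entire proof conceptually: $E$-stable equivalences are precisely the morphisms carried to isomorphisms by the additive quotient functor $\mathcal{C}\to\mathcal{C}/\langle E\text{-projectives}\rangle$, and a composite of isomorphisms is an isomorphism.
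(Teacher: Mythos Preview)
Your proof is correct and follows essentially the same route as the paper: pick stable inverses, expand the composite using one of the ``identity plus map through a projective'' relations, and observe that the error term factors through a direct sum of $E$-projectives. The paper carries out exactly this computation explicitly (writing $f'\circ g'\circ g\circ f-\id_X = s_X\circ i_X + f'\circ s_Y\circ i_Y\circ f$ and noting this factors through $P_X\oplus P_Y$); your framing in terms of the two-sided ideal of maps factoring through $E$-projectives, and the closing remark about the additive quotient category, are a mild repackaging of the same argument.
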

\begin{proof}
Let $X\stackrel{f}{\longrightarrow} Y\stackrel{g}{\longrightarrow} Z$
be a pair of $E$-stable equivalences.
Then there exist $E$-projective objects $P_X,P_Y$ of $\mathcal{C}$ 
and morphisms 
\begin{eqnarray*} 
Y & \stackrel{f^{\prime}}{\longrightarrow} & X, \\
Z & \stackrel{g^{\prime}}{\longrightarrow} & Y, \\
X & \stackrel{i_X}{\longrightarrow} & P_X, \\
Y & \stackrel{i_Y}{\longrightarrow} & P_Y, \\
P_X & \stackrel{s_X}{\longrightarrow} & X, \mbox{\ and} \\
P_Y & \stackrel{s_Y}{\longrightarrow} & Y \end{eqnarray*}
such that 
\begin{eqnarray*} \label{eqn 1} f^{\prime}\circ f - \id_X & = & s_X\circ i_X \mbox{\ and} \\
\label{eqn 2} g^{\prime}\circ g - \id_Y & = & s_Y \circ i_Y.\end{eqnarray*}
Then we have
\[ f^{\prime}\circ g^{\prime}\circ g \circ f - \id_X = 
s_x\circ i_X + f^{\prime}\circ s_Y \circ i_Y\circ f\]
so $f^{\prime}\circ g^{\prime}\circ g \circ f - \id_X$
factors through the $E$-projective $P_X\oplus P_Y$.
A similar argument applies to showing
that $g\circ f\circ f^{\prime} \circ g^{\prime} - \id_Z$
factors through an $E$-projective. So 
$g\circ f$ is an $E$-stable equivalence.
\end{proof}

\begin{definition-proposition}
\label{waldhausen cat from an allowable class}
Let $\mathcal{C}$ be an abelian category, let
$E,F$ be allowable classes in $\mathcal{C}$ with $F\subseteq E$.
Suppose each of the following conditions are satisfied:
\begin{itemize}
\item
$F$ has retractile monics.
\item
$E$ has retractile monics and sectile epics.
\item 
$\mathcal{C}$ has enough $F$-injectives.
\item 
$\mathcal{C}$ has enough $E$-projectives and enough $E$-injectives.
\item 
Every $E$-projective object is $E$-injective.
\end{itemize}
Then there exists a 
Waldhausen category structure on $\mathcal{C}$ in 
which the cofibrations are the $F$-monomorphisms
and the weak equivalences are the 
$E$-stable equivalences.
We write $\mathcal{C}_{E-we}^{F-cof}$ for this Waldhausen category.
This Waldhausen category satisfies the
saturation axiom and the extension axiom.
\end{definition-proposition}
\begin{proof}
We check Waldhausen's axioms from 
Definition~\ref{def of waldhausen cat}.
In the case of an abelian category $\mathcal{C}$ 
and allowable classes $E,F$
with the stated 
classes of cofibrations and weak equivalences,
axioms (Cof 1) and (Cof 2) and (Weq 1) are immediate.
That the class of cofibrations is closed under composition is Lemma~\ref{composite of E-monics is E-monics}.
That the class of weak equivalences is closed under composition is Lemma~\ref{composite of stable equivalences is a stable equivalence}.
We show that the remaining axioms are satisfied:
\begin{itemize}
\item Axiom (Cof 3) is a consequence of Lemma~\ref{when admissible monics are closed under pushout}.
\item Axiom (Weq 2) is actually fairly substantial and takes some
work to prove---enough so that we moved this work into a paper of its own,
\cite{rectifications}. In Corollary~4.4 of that paper, we prove that,
if $E=F$, $\mathcal{C}$ has enough $E$-projectives and enough
$E$-injectives, $E$ has sectile epics and retractile monics,
and every $E$-projective object is $E$-injective, then
$\mathcal{C}$ satisfies axiom (Weq 2). We refer the reader to 
that paper for the proof, which requires some work and a number of preliminary 
results, and would make the present paper much longer if we included it here.

Once we have the result for $F=E$, it follows for $F\subseteq E$, since if
diagram~\ref{weq 2 diagram} has its horizontal maps $F$-monomorphisms,
the horizontal maps are also then $E$-monomorphisms.
\item
The saturation axiom follows easily from Lemma~\ref{composite of stable equivalences is a stable equivalence} together with the observation that,
if $X\stackrel{f}{\longrightarrow} Y$ is a $E$-stable equivalence and $Y\stackrel{f^{\prime}}{\longrightarrow} X$ is a morphism such that
$f\circ f^{\prime} - \id_Y$ and $f^{\prime}\circ f - \id_X$ both factor through $E$-projective objects, then $f^{\prime}$ is also a $E$-stable equivalence.
That is, $E$-stable equivalences have ``up-to-equivalence inverses.''
\item
Finally, we handle the extension axiom. We begin by assuming that $E=F$, and that
\begin{equation}\label{ses diagram} \xymatrix{ 
X \ar[r]\ar[d]^f & Y \ar[r]\ar[d]^g & Y/X\ar[d]^h \\
X^{\prime} \ar[r] & Y^{\prime} \ar[r] & 
 Y^{\prime}/X^{\prime} }\end{equation}
is a map of cofiber sequences (i.e., short exact sequences in $E$) and
the maps $X\rightarrow X^{\prime}$
and $Y/X\rightarrow Y^{\prime}/X^{\prime}$
are $E$-stable equivalences.
Then, for any object $M$ of $\mathcal{C}$, we have the commutative diagram
with exact columns
\[\xymatrix{
\Ext^1_{\mathcal{C}/E}(X^{\prime}, M)\ar[d]\ar[r]^{\cong} &  \Ext^1_{\mathcal{C}/E}(X, M)\ar[d] \\
\Ext^2_{\mathcal{C}/E}(Y^{\prime}/X^{\prime}, M)\ar[d]\ar[r]^{\cong} & \Ext^2_{\mathcal{C}/E}(Y/X, M)\ar[d] \\
\Ext^2_{\mathcal{C}/E}(Y^{\prime}, M)\ar[d]\ar[r] & \Ext^2_{\mathcal{C}/E}(Y, M)\ar[d] \\
\Ext^2_{\mathcal{C}/E}(X^{\prime}, M)\ar[d]\ar[r]^{\cong} & \Ext^2_{\mathcal{C}/E}(X, M)\ar[d] \\
\Ext^3_{\mathcal{C}/E}(Y^{\prime}/X^{\prime}, M)\ar[r]^{\cong} & \Ext^3_{\mathcal{C}/E}(Y/X, M) .}\]
The horizontal maps marked as isomorphisms are isomorphisms because
an $E$-stable equivalence $A\rightarrow B$ induces a natural equivalence
of functors $\Ext^i_{\mathcal{C}/E}(B, -)\stackrel{\cong}{\longrightarrow}
\Ext^i_{\mathcal{C}/E}(A, -)$ for all $i\geq 1$; this is 
Lemma~3.6 of \cite{rectifications}. By the Five Lemma, we then
have a natural isomorphism of functors
$\Ext^2_{\mathcal{C}/E}(Y^{\prime}, -)\stackrel{\cong}{\longrightarrow}
\Ext^2_{\mathcal{C}/E}(Y, -)$.
But since every $E$-projective is $E$-injective, this natural
transformation being a natural isomorphism implies that
$Y\rightarrow Y^{\prime}$ is an $E$-stable equivalence; this is
Lemma~4.2 of \cite{rectifications}. Hence the extension axiom is satisfied if $E=F$.
Now if we do not have $E=F$ but instead $F\subseteq E$, then the extension axiom remains satisfied, as we have
fewer diagrams to check the extension axiom for.
\end{itemize}
\end{proof}

\section{Absolute and relative quasi-Frobenius conditions.}

\subsection{Definitions.}

To our knowledge these definitions are all new. They are variants
of the condition that every object embeds in a projective object, which
Faith and Walker showed (see Theorem~\ref{faith-walker thm}) 
to be equivalent, for categories of 
modules over a ring, to the ring being quasi-Frobenius.

\begin{definition} \label{relative qf conditions}
\begin{itemize}
\item 
Let $\mathcal{C}$ be an abelian category, $E,F$ a pair of allowable classes in $\mathcal{C}$. We say that $\mathcal{C}$ is {\em cone-Frobenius relative to $E,F$}
if, for any object $X$ of $\mathcal{C}$, there exists an $F$-monomorphism from $X$ to an $E$-projective object
of $\mathcal{C}$. 

We say that $\mathcal{C}$ is {\em functorially cone-Frobenius relative to $E,F$} if there exists a functor $J: \mathcal{C}\rightarrow\mathcal{C}$
and a natural transformation $\eta: \id_{\mathcal{C}}\rightarrow J$ such that:
\begin{enumerate}
  \item $J(X)$ is $E$-projective for every object $X$ of $\mathcal{C}$,
  \item $\eta(X): X\rightarrow J(X)$ is an $F$-monomorphism for every object $X$ of $\mathcal{C}$, and
  \item if $f: X\rightarrow Y$ is an $F$-monomorphism then so is the map $J(f): J(X)\rightarrow J(Y)$, and so is the universal map
\begin{equation}\label{monotone growth} Y\coprod_X J(X) \rightarrow J(Y).\end{equation}
\end{enumerate}
We sometimes call the pair $J,\eta$ a {\em cone functor on $\mathcal{C}$ relative 
to $E,F$.} When $E,F$ are understood from context we simply call 
$J,\eta$ a {\em relative cone functor.}
\item {\bf (The absolute case.)}
If $E=F$ is the class of all short exact sequences in $\mathcal{C}$ and $\mathcal{C}$ is cone-Frobenius relative to $E,F$, 
then we simply say that $\mathcal{C}$ is {\em cone-Frobenius.}

If $E=F$ is the class of all short exact sequences in $\mathcal{C}$ and $\mathcal{C}$ is functorially cone-Frobenius relative to $E,F$, 
then we simply say that $\mathcal{C}$ is {\em functorially cone-Frobenius.}
\end{itemize}
\end{definition}
The idea behind this definition is that the 
$F$-monomorphism and $E$-projective object
together are a kind of ``mapping cone,'' in the sense of homotopy theory, on $X$:
an embedding into a contractible object. 

These definitions are very reasonable and in fact are very generally satisfied in the presence of much milder-looking
quasi-Frobenius conditions on the abelian category. We present some theorems to that effect from our paper \cite{injectiveresolutions}, using the following
definition:
\begin{definition}\label{def of monic gen set}
Let $\mathcal{C}$ be an abelian category, and let $E$ be an allowable class in $\mathcal{C}$.
We will say that a set of objects $\{ X_s\}_{s\in S}$ of $\mathcal{C}$ is 
an {\em $E$-monic generating set for $\mathcal{C}$} if the following condition is satisfied:
\begin{itemize}
\item If $f: M \rightarrow N$ is a morphism in $\mathcal{C}$ such that $f\circ g \neq 0$ for every $E$-monomorphism $g: X_s\rightarrow M$,
then $f$ is $E$-monic.\end{itemize}

If an $E$-monic generating set for $\mathcal{C}$ exists, then we say that $\mathcal{C}$ is {\em $E$-monically small}.
If $\kappa$ is a cardinal number and an $E$-monic generating set for $\mathcal{C}$ exists with $\leq \kappa$ objects in it, then we say that $\mathcal{C}$ is 
{\em $E$-monically $\kappa$-small}.
\end{definition}
For example, we prove the following in~\cite{injectiveresolutions}:
\begin{prop}\label{injectiveresolutions cited result 1}
Let $R$ be a ring and let $\Mod(R)$ be the category of left $R$-modules. Let $E$ be the absolute allowable class on $\mathcal{C}$, i.e.,
$E$ is the class of {\em all} short exact sequences in $\mathcal{C}$. Then $\Mod(R)$ is $E$-monically $\kappa$-small, where $\kappa$ is the number
of left ideals of $R$.
\end{prop}
Then we have the theorem, also from~\cite{injectiveresolutions}:
\begin{theorem}\label{injectiveresolutions cited result 2}
Suppose $\mathcal{C}$ is an abelian category, $E,F$ allowable classes in $\mathcal{C}$. Suppose $F$ has retractile monics, and suppose that
$\mathcal{C}$ is $F$-monically $\kappa$-small and $\mathcal{C}$ has all coproducts of cardinality $\leq \kappa$. 
Suppose that every object of $\mathcal{C}$ embeds 
in an $F$-injective object by an $F$-monomorphism, i.e., for every object $X$ there exists an $F$-monomorphism from $X$ to an $F$-injective object.
Suppose every $F$-injective object of $\mathcal{C}$ is $E$-projective.
Then $\mathcal{C}$ is functorially cone-Frobenius relative to $E,F$.
\end{theorem}
As a special case:
\begin{corollary}\label{injectiveresolutions cited result 3}
Suppose $\mathcal{C}$ is an abelian category with enough injectives, with all small coproducts, and which is monically small. Suppose that every 
injective object in $\mathcal{C}$ is projective.
Then $\mathcal{C}$ is functorially cone-Frobenius.
\end{corollary}
As an even more special case:
\begin{corollary}\label{injectiveresolutions cited result 4}
Let $R$ be a quasi-Frobenius ring. Then the category of $R$-modules is functorially cone-Frobenius.
\end{corollary}
Finally, there is the question of when we have the functorial cone-Frobenius condition on the {\em finitely generated} modules over a ring. This is another
result from~\cite{injectiveresolutions}:
\begin{corollary}\label{injectiveresolutions cited result 5}
Let $k$ be a finite field and let $R$ be an quasi-Frobenius 
associative $k$-algebra which is 
finite-dimensional as a $k$-vector space. Then the category of
finitely generated (left) $R$-modules is functorially cone-Frobenius.
\end{corollary}
So the functorial cone-Frobenius condition seems to be a reasonable and often-satisfied one.

We also want to consider a relative form of the cone-Frobenius condition:
\begin{definition}
\begin{itemize}
\item
If $\mathcal{C},\mathcal{C}^{\prime}$ are abelian categories and $i: \mathcal{C}^{\prime}\rightarrow \mathcal{C}$ is an additive functor with right adjoint $r$,
let $E$ denote the allowable class of all short exact sequences in $\mathcal{C}$ and let $E^{\prime}$ the allowable class of 
all short exact sequences in $\mathcal{C}^{\prime}$. Then we say that $\mathcal{C}^{\prime}$ is {\em relatively quasi-Frobenius over $\mathcal{C}$}
(resp. {\em functorially relatively quasi-Frobenius over $\mathcal{C}$})
if $\mathcal{C}^{\prime}$ is cone-Frobenius relative to $i^*E,r_*E$ (resp. functorially cone-Frobenius relative to $i^*E,r_*E$).
\end{itemize}
\end{definition}
The idea here is that, in $\mathcal{C}^{\prime}$, 
any object embeds into some other object
in such a way that, once one applies $i$, one gets a mapping cone 
(an $E$-monomorphic embedding into a contractible, i.e., $E$-projective, object)
in $\mathcal{C}$. If the reader is wondering whether such a thing
is really more than a cone functor on $\mathcal{C}^{\prime}$ or a
cone functor on $\mathcal{C}$, the reader might try letting
$\mathcal{C}$ and $\mathcal{C}^{\prime}$ be module categories over
rings, and let $i$ be the base-change/extension-of-scalars 
functor induced by a {\em surjection}
of rings. 
The fact that ring surjections are rarely flat and hence that $i$ is not generally
left exact, i.e., $i$ does not generally preserve monomorphisms, 
means that most attempts one might make to produce a mapping cone
on $\mathcal{C}^{\prime}$ do not give mapping cones on $\mathcal{C}$
after applying $i$. So the relative Frobenius conditions really are expressing
something nontrivial.

Now the Faith-Walker Theorem, stated above as Theorem~\ref{faith-walker thm},
can be restated using our definitions:
{\em a ring $R$ is quasi-Frobenius if and only if
the category of $R$-modules is cone-Frobenius.} 

\subsection{Existence of cylinder functors.}

Now we show the equivalence of the functorial cone-Frobenius condition with 
the existence of cylinder functors satisfying the cylinder axioms.

First, we need a couple of lemmas:
\begin{lemma}\label{direct sum of elements of E is in E}
Suppose $\mathcal{C}$ is an abelian category, $E$ an allowable class in $\mathcal{C}$ with sectile epics.
Suppose $\mathcal{C}$ has enough $E$-projectives. Then any finite direct sum of members of $E$ is in $E$.
\end{lemma}
\begin{proof}
Let $I$ be a finite set and 
\[ 0\rightarrow X_i\rightarrow Y_i\rightarrow Z_i \rightarrow 0\]
be a member of $E$ for every $i\in I$. Then, for any $E$-projective object $P$ of $\mathcal{C}$,
we have the commutative diagram
\[\xymatrix{ 
\hom_{\mathcal{C}}(P, \oplus_i Y_i) \ar[r] \ar[d]^{\cong} & \hom_{\mathcal{C}}(P, \oplus_i Z_i) \ar[d]^{\cong} \\
\oplus_i \hom_{\mathcal{C}}(P, Y_i) \ar[r] & \oplus_i \hom_{\mathcal{C}}(P, Z_i).
}\]
The bottom horizontal map is a surjection of abelian groups, so the top horizontal map is as well. Now by Heller's theorem~\ref{heller's theorem},
the map $\oplus_i Y_i\rightarrow\oplus_i Z_i$ is an $E$-epimorphism.
So the short exact sequence
\[ 0 \rightarrow \oplus_i X_i \rightarrow \oplus_i Y_i\rightarrow \oplus_i Z_i \rightarrow 0\]
is in $E$.
\end{proof}

\begin{lemma} {\bf (Shearing $E$-monics.)} \label{shearing E-monics}
Let $\mathcal{C}$ be an abelian category and let $E$ be an allowable class in $\mathcal{C}$.
Suppose $X,Y,Z$ are objects in $\mathcal{C}$ and suppose we have $E$-monomorphisms
$e: X\rightarrow Y$ and $f: Z\rightarrow Y$.
Let $s$ be the morphism 
\[ s: X\oplus Z\rightarrow Y\oplus Z\]
given by the matrix of maps
\[ s= \left[ \begin{array}{ll} e & f \\ 0 & \id_Z \end{array}\right] .\]
Then $\coker s$ is naturally isomorphic to $\coker e$.
Furthermore, if $\mathcal{C}$ has enough $E$-injectives and $E$ has retractile monics, then $s$ is an $E$-monomorphism.
\end{lemma}
\begin{proof}
We first show that $\coker e\cong \coker s$. But this follows immediately from the commutative diagram with exact rows and exact columns:
\[\xymatrix{
0 \ar[r]\ar[d] & 0 \ar[r]\ar[d] & 0 \ar[r]\ar[d] & 0 \ar[r]\ar[d] & 0 \ar[d] \\
0 \ar[r] \ar[d] & X \ar[r]^{e} \ar[d]^i & Y \ar[r]\ar[d]^i & \coker e \ar[r]\ar[d] & 0 \ar[d] \\
0 \ar[r] \ar[d] & X\oplus Z \ar[r]^s \ar[d]^{\pi} & Y\oplus Z \ar[d]^{\pi} \ar[r] & \coker s \ar[d] \ar[r] & 0 \ar[d] \\
0 \ar[r] \ar[d] & Z \ar[r]^{\id} \ar[d] & Z \ar[d] \ar[r] & 0 \ar[d] \ar[r] & 0 \ar[d] \\
0\ar[r] & 0\ar[r] & 0\ar[r] & 0\ar[r] & 0
}\]
in which the maps marked $\pi$ are projections to the second summand, and the maps marked $i$ are inclusions as the first summand.

Now assume that $E$ has retractile monics, and let $t: Y\oplus Z \rightarrow Y\oplus Y$ be the map given by the matrix of maps
\[ t = \left[ \begin{array}{ll} \id_Y & -f \\ 0 & f \end{array} \right] .\]
Then a matrix multiplication reveals that the composite map
$t\circ s: X\oplus Z\rightarrow Y\oplus Y$ is the direct sum map $e\oplus f$, a direct sum of $E$-monomorphisms, hence by 
Lemma~\ref{direct sum of elements of E is in E}, itself an $E$-monomorphism. (Note that, by taking the opposite category and noticing that
the definition of an allowable class in an abelian category is self-dual, we get the conclusion of Lemma~\ref{direct sum of elements of E is in E}
if $E$ has retractile monics and $\mathcal{C}$ has enough $E$-injectives.)
Now since $t\circ s$ is an $E$-monomorphism and
$E$ is assumed to have retractile monics, $s$ is also an $E$-monomorphism.
\end{proof}

\begin{theorem} 
\label{existence of cylinder functors}
Let $\mathcal{C},E,F$ be as in Proposition-Definition~\ref{waldhausen cat from an allowable class}.
Then the Waldhausen category 
$\mathcal{C}_{E-we}^{F-cof}$ admits a cylinder functor satisfying the cylinder
axiom if and only if $\mathcal{C}$
is functorially cone-Frobenius relative to $E,F$.
\end{theorem}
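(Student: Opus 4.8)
The plan is to prove both directions by making the connection, already sketched in the introduction, between a cylinder functor and a functorial embedding of every object into an $E$-projective by an $F$-monomorphism. For the ``only if'' direction, I would start with a cylinder functor $T$ on $\mathcal{C}_{E-we}^{F-cof}$ satisfying the cylinder axiom, and apply it to the arrow $f\colon X\rightarrow \pt$. This produces a functorial diagram $X\stackrel{j_1}{\rightarrow} T(X\rightarrow\pt)\stackrel{p}{\rightarrow}\pt$ with $j_2\colon \pt\rightarrow T(X\rightarrow\pt)$; the axiom (Cyl 1) applied with the cofibration structure should force $j_1$ to be a cofibration, i.e.\ an $F$-monomorphism, and the cylinder axiom forces $p$ to be a weak equivalence, i.e.\ $T(X\rightarrow\pt)$ is $E$-stably equivalent to $\pt$, hence $E$-projective (a zero object is $E$-projective, and an object $E$-stably equivalent to an $E$-projective is itself $E$-projective, since if $Y$ is a retract up to stable equivalence of an $E$-projective then $\id_Y$ factors through an $E$-projective, making $Y$ a summand of an $E$-projective, hence $E$-projective). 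Functoriality of $T$ in the arrow category, together with functoriality of the assignment $X\mapsto (X\rightarrow\pt)$, makes $J(X):=T(X\rightarrow\pt)$ into a functor and $j_1$ into a natural transformation $\eta\colon\id_{\mathcal{C}}\rightarrow J$. The only remaining point is condition (3) of Def.~\ref{relative qf conditions}: if $g\colon X\rightarrow Y$ is an $F$-monomorphism then $J(g)$ is too; this should follow from (Cyl 1) applied to the commutative square whose vertical maps are $g$ and $\id_{\pt}$ (both cofibrations, using (Cof 2) for the latter), which tells us the induced map $X\coprod\pt\rightarrow Y\coprod\pt$ on mapping cylinders, i.e.\ essentially $J(g)$, is a cofibration.

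For the ``if'' direction, suppose we are given a relative cone functor $(J,\eta)$ as in Def.~\ref{relative qf conditions}. The plan is to build a cylinder functor by the standard ``double mapping cylinder'' recipe adapted to this setting. Given an arrow $f\colon X\rightarrow Y$, I would form the object $J(X)$ and set
\[ T(f) := \bigl(J(X)\oplus Y\bigr)\coprod_{X} Y, \]
where $X$ maps into $J(X)\oplus Y$ by $(\eta(X), f)$ and into the second copy of $Y$ by $f$ — more carefully, one takes the pushout of $J(X)\oplus Y \xleftarrow{(\eta(X),f)} X \xrightarrow{f} Y$, or equivalently forms $T(f)$ as the cokernel of the $F$-monomorphism $X \xrightarrow{(\eta(X), -f)} J(X)\oplus Y$ and then glues on the mapping-cylinder data. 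The maps are: $j_1\colon X\rightarrow T(f)$ through $\eta(X)$ into the $J(X)$-factor; $j_2\colon Y\rightarrow T(f)$ into the glued copy of $Y$; and $p\colon T(f)\rightarrow Y$ defined by sending $J(X)$ to $0$ (it is an object mapping to $\pt$, but we need a map $J(X)\rightarrow Y$ — actually $p$ sends the $J(X)$-summand via the zero map only after checking the pushout relations, so $p$ is induced by $(0, \id_Y)$ on $J(X)\oplus Y$ and $\id_Y$ on the other copy of $Y$, which agree on $X$ since $f = f$). One checks $p\circ j_2 = \id_Y$ and $p\circ j_1 = f$ directly. That $j_1$ is a cofibration uses that $\eta(X)$ is an $F$-monomorphism and Lemma~\ref{when admissible monics are closed under pushout} (together with closure of $F$-monomorphisms under composition, Lemma~\ref{composite of E-monics is E-monics}, and under direct sum). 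That $p$ is a weak equivalence — the cylinder axiom — is where I would use that $J(X)$ is $E$-projective: the ``difference'' between $T(f)$ and $Y$ is built from $J(X)$, which is contractible, so $p$ should be an $E$-stable equivalence; concretely, $j_2$ should be a two-sided stable-equivalence inverse to $p$, with the homotopy $j_2\circ p\simeq \id_{T(f)}$ witnessed by a factorization through $J(X)$.

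Functoriality of $T$ in the arrow category follows from functoriality of $J$ and of pushouts, and condition (Cyl 2), $T(\pt\rightarrow Y)=Y$ with $p=j_2=\id_Y$, follows because $J(\pt)$ can be taken to be $\pt$ (or, if not, one must be slightly careful — but $\eta(\pt)\colon\pt\rightarrow J(\pt)$ being an $F$-monomorphism from a zero object and $J(\pt)$ being $E$-projective, one still gets $T(\pt\rightarrow Y)\cong Y$ after collapsing; alternatively, replace $J$ by $X\mapsto J(X)$ only for nonzero $X$, a harmless modification). Condition (Cyl 1) — that a map of arrows with cofibration (resp.\ weak equivalence) legs induces a cofibration (resp.\ weak equivalence) on $X\coprod Y\rightarrow T(f)$ — I would check by naturality of the pushout construction, using condition (3) of the relative cone functor for the cofibration case and using axiom (Weq 2) of $\mathcal{C}_{E-we}^{F-cof}$ (Prop.-Def.~\ref{waldhausen cat from an allowable class}) for the weak-equivalence case, since $T(f)$ is built as an iterated pushout along cofibrations out of $X$, $Y$, and $J(X)$.

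I expect the main obstacle to be the verification of the cylinder axiom (that $p$ is a weak equivalence) — more precisely, exhibiting the stable-homotopy inverse and the homotopy $j_2 p \simeq \id_{T(f)}$ cleanly — because this is exactly the point where the cone-Frobenius hypothesis must be converted into a contractibility statement, and it requires identifying the relevant map as factoring through the $E$-projective $J(X)$. The second delicate point is checking (Cyl 1) for weak equivalences; this is where one genuinely needs the already-established (and nontrivial, per the remark citing \cite{rectifications}) axiom (Weq 2), so I would phrase that step as a direct appeal to gluing weak equivalences along cofibrations rather than reproving anything. The bookkeeping around $J(\pt)$ versus $\pt$ in (Cyl 2) is routine but worth handling explicitly.
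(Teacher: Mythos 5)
Your ``only if'' direction is essentially the paper's argument and is fine: apply $T$ to $X\rightarrow\pt$, use the cylinder axiom to see $T(X\rightarrow\pt)$ is $E$-stably trivial hence $E$-projective, and use (Cyl 1) against squares of the form $f\coprod\id_{\pt}$ to get condition (3) of the relative cone functor. Your observation that (Cyl 2) forces a harmless normalization $J(\pt)=\pt$ is a fair point that the converse direction also has to address.

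The ``if'' direction, however, has a genuine gap: your double mapping cylinder does not work as described. First, the two descriptions you offer as ``equivalent'' are not: the pushout of $J(X)\oplus Y\xleftarrow{(\eta(X),f)}X\xrightarrow{f}Y$ is a quotient of $J(X)\oplus Y\oplus Y$, while $\coker\bigl(X\xrightarrow{(\eta(X),-f)}J(X)\oplus Y\bigr)$ is a quotient of $J(X)\oplus Y$; these are not isomorphic in general. More seriously, whichever pushout you take, the structure maps force $a\circ(\eta(X),f)=j_2\circ f$ in $T(f)$, so you must choose: if $j_1$ is defined as the composite $X\xrightarrow{\eta(X)}J(X)\hookrightarrow J(X)\oplus Y\rightarrow T(f)$ (``through the $J(X)$-factor''), then $p\circ j_1=0\neq f$; if instead $j_1=a\circ(\eta(X),f)$, then $j_1=j_2\circ f$ factors through $f$ and is not a monomorphism whenever $f$ is not --- in particular, for $f\colon X\rightarrow\pt$ one gets $j_1=0$, so the construction cannot recover a cone functor and fails the very property the theorem is about. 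The underlying problem is that a mapping cylinder glued from a \emph{cone} $X\rightarrowtail J(X)\xrightarrow{\sim}\pt$ rather than a genuine cylinder object $X\oplus X\rightarrowtail\widetilde{X}\xrightarrow{\sim}X$ collapses the front inclusion. The paper avoids all of this by taking simply $T(f)=Y\oplus J(X)$ with $j_1=(f,\eta(X))$, $j_2=(\id_Y,0)$, and $p=\pi_Y$: then $j_1$ is an $F$-monomorphism because $\pi_{J(X)}\circ j_1=\eta(X)$ is and $F$ has retractile monics (Def.~\ref{def of retractile monics}); the cylinder axiom holds because $\pi_Y$ is a direct sum of the identity with the stable equivalence $J(X)\rightarrow\pt$; and (Cyl 1) reduces to statements about direct sums of $F$-monomorphisms and of stable equivalences, with no pushouts and hence no appeal to (Weq 2) or to Lemma~\ref{when admissible monics are closed under pushout} needed. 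I recommend replacing your construction with this one; your verification strategy for (Cyl 1) and the cylinder axiom then goes through essentially verbatim and becomes much shorter.
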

\begin{proof}
If $\mathcal{C}_{E-we}^{F-cof}$ has a cylinder functor satisfying the cylinder
axiom, then
the cylinder functor $I$ sends, for any object $X$ of $\mathcal{C}$, the map 
$X\rightarrow \pt$ to the diagram
\[ \xymatrix{ X \ar[r]^{F-cof}\ar[rd] & I(X)\ar[d]^{E-we} & \pt\ar[l]\ar[ld] \\
 & \pt & }\]
where the map marked $F-cof$ is an $F$-monic and the map marked $E-we$ is
an $E$-stable equivalence. But for an object's map to the zero object
to be an $E$-stable equivalence, this is equivalent to that object
being an $E$-projective. Furthermore, if $f:X\rightarrow Y$
is any $F$-monomorphism in $\mathcal{C}$,
then we have the commutative diagram
\[ \xymatrix{
 X \coprod \pt \ar[d]^{f\coprod \id_{\pt}} \ar[r] & 
  I(X\coprod \pt) \ar[d]^{I(f\coprod \id_{\pt})} \\
 Y \coprod \pt \ar[r] &
  I(Y\coprod \pt) }\]
and condition (Cyl 1) in the definition of a cylinder
functor requires that $I(f \coprod \id_{\pt})$
be an $F$-monomorphism as well.
This completes one direction of the proof: $X \mapsto I(X\rightarrow \pt)$
is a relative cone functor.

Now suppose we have a functor $J$ and natural transformation $e$ as in the definition
of a relative cone functor in Definition~\ref{relative qf conditions}.
We claim that the functor sending any map $X\stackrel{f}{\longrightarrow} Y$ to the diagram
\[\xymatrix{ 
X\ar[r]^{(f,e(X))} \ar[rd]_f & 
 Y\oplus J(X) \ar[d]^{\pi_Y} & 
 Y\ar[l]_{(\id,0)} \ar[ld]^{\id} \\
 & Y & , }\]
where $\pi_Y$ is projection to $Y$, is a cylinder functor on $\mathcal{C}$
satisfying the cylinder axiom. We check Waldhausen's conditions from
Definition~\ref{def of waldhausen cat}. 
Condition (Cyl 2) is immediate, and the cylinder axiom 
follows from the projection 
$J(X)\oplus Y \rightarrow \pt\oplus Y\cong Y$
being a direct sum of $E$-stable equivalences, hence itself
an $E$-stable equivalence.

We handle condition (Cyl 1) as follows: 
if $X^{\prime}\rightarrow X$ and $Y^{\prime}\rightarrow Y$
are $F$-monomorphisms, then the 
direct sum 
$J(X^{\prime})\oplus Y^{\prime}\rightarrow J(X)\oplus Y$
is an $F$-monomorphism by the assumption that $J$ sends
$F$-monomorphisms to $F$-monomorphisms, and 
\[ X \coprod_{X^{\prime}} J(f^{\prime}) \coprod_{Y^{\prime}} Y \rightarrow J(f)\]
being an $F$-monomorphism is exactly equivalent to the 
map~\ref{monotone growth} being an $F$-monomorphism, which is part of the definition of the functorial cone-Frobenius condition.

Now suppose that $X^{\prime}\rightarrow X$ and $Y^{\prime}\rightarrow Y$
are $E$-stable equivalences.
We also note that, since $J(X^{\prime}),J(X)$ are
$E$-projective, the projections $J(X^{\prime})\rightarrow \pt$
and $J(X)\rightarrow \pt$ are $E$-stable equivalences,
and Lemma~\ref{composite of stable equivalences is a stable equivalence} gives us that the composite of either one with
an $E$-stable inverse of the other is an $E$-stable
equivalence between $J(X)$ and $J(X^{\prime})$. So
$J(X^{\prime})\oplus Y^{\prime}\rightarrow J(X)\oplus Y$
is an $E$-stable equivalence. 

So condition (Cyl 1) holds.

Lemma~\ref{shearing E-monics} implies that the direct sum map
\[ \left[\vcenter{\xymatrix{ \id_Y & f \\ 0 & e(X) } }\right] :
Y \oplus X \rightarrow Y\oplus J(X)\]
is an $E$-monomorphism, i.e., a cofibration, so condition (Cyl 3) holds.

On the other hand, one easily verifies that, if $\mathcal{C}$ has a cylinder 
functor,
then that cylinder functor is precisely the functor necessary for $\mathcal{C}$
to be functorially quasi-Frobenius category relative to $E,F$, from Definition~\ref{relative qf conditions}.
\end{proof}

\begin{corollary}\label{main cor}
Suppose $\mathcal{C}$ is a quasi-Frobenius abelian category with 
enough projectives and functorially
enough injectives. Then $\mathcal{C}$ admits the structure of a Waldhausen
category in which the cofibrations are the monomorphisms and the
weak equivalences are the stable equivalences. Furthermore,
this Waldhausen category satisfies the 
saturation and extension axioms, and it admits a cylinder functor
satisfying the cylinder axiom.
We call this Waldhausen category the {\em stable $G$-theory of $\mathcal{C}$},
and we call its associated $K$-groups the {\em stable $G$-theory
groups of $\mathcal{C}$.}
\end{corollary}

\begin{corollary}\label{algebraic main cor}
Suppose $k$ is a finite field and $R$ a quasi-Frobenius $k$-algebra which is 
finite-dimensional as a $k$-vector space.
Then the category of finitely generated (left) $R$-modules
admits the structure of a Waldhausen
category in which the cofibrations are the monomorphisms and the
weak equivalences are the stable equivalences. Furthermore,
this Waldhausen category satisfies the 
saturation and extension axioms, and it admits a cylinder functor
satisfying the cylinder axiom.
\end{corollary}

\subsection{Multiplicative structure.}

We recall Waldhausen's construction of multiplicative structures on the $K$-theory of Waldhausen categories:
\begin{theorem} {\bf (Waldhausen.)} \label{waldhausen product thm}
Suppose $\mathcal{A},\mathcal{B},\mathcal{C}$ are Waldhausen categories 
and $F: \mathcal{A}\times \mathcal{B} \rightarrow \mathcal{C}$ is a functor satisfying each of the following conditions:
\begin{itemize}
\item\label{wh product condition 1} If $M \rightarrow N$ is a cofibration in $\mathcal{A}$ and $L$ is an object of $\mathcal{B}$, then
$F(M, L) \rightarrow F(N, L)$ is a cofibration in $\mathcal{C}$.
\item\label{wh product condition 2} If $M \rightarrow N$ is a cofibration in $\mathcal{B}$ and $L$ is an object of $\mathcal{C}$, then
$F(L, M) \rightarrow F(L, N)$ is a cofibration in $\mathcal{C}$.
\item\label{wh product condition 3} If $M^{\prime}\rightarrow M$ and $N^{\prime}\rightarrow N$ are cofibrations in $\mathcal{A}$ and $\mathcal{B}$, respectively,
then the universal map 
\[ F(M^{\prime}, N)\coprod_{F(M^{\prime},N^{\prime})} F(M, N^{\prime}) \rightarrow F(M,N)\]
is a cofibration in $\mathcal{C}$.
\end{itemize}
Then $F$ induces a natural pairing 
\[ \Omega \left| wS_{\cdot} \mathcal{A}\right| \times \Omega \left| wS_{\cdot} \mathcal{B}\right| \rightarrow
 \Omega \Omega \left| wS_{\cdot}S_{\cdot} \mathcal{C}\right|\stackrel{\cong}{\longrightarrow} \Omega\left| wS_{\cdot} \mathcal{C}\right|. \]
\end{theorem}

Waldhausen's pairing is sufficiently natural to imply the following (see~\cite{MR2544391} for a good expository account of infinite loop spaces with
ring structure, but note that the statement we can make, below, is about {\em homotopy-commutative} ring spaces and spectra, but {\em not}
$E_\infty$-ring spaces or spectra):
\begin{corollary}\label{wh product cor}
Suppose $\mathcal{C}$ is a Waldhausen category which is equipped with a symmetric monoidal product
$X, Y \mapsto X\otimes Y$ satisfying the conditions for the functor $F$ in Theorem~\ref{waldhausen product thm}.
Then $\Omega \left| wS_{\cdot} \mathcal{C}\right|$ is a homotopy-commutative ring space, that is,
the infinite loop space $\Omega \left| wS_{\cdot} \mathcal{C}\right|$ is equipped with a homotopy-commutative product that
is compatible with its loop space structure. 

Equivalently, when regarded as a spectrum using its infinite loop space structure,
$\Omega \left| wS_{\cdot} \mathcal{C}\right|$ is a homotopy-commutative ring spectrum.
\end{corollary}

\begin{lemma}\label{elementary linear algebra lemma}
Let $k$ be a field and let $M^{\prime},M,N^{\prime},N$ be finite-dimensional
$k$-vector spaces. Suppose we have monomorphisms
$M^{\prime}\rightarrow M$ and $N^{\prime}\rightarrow N$ of $k$-vector spaces.
Then the canonical map
\[ M^{\prime}\otimes_k N\coprod_{M^{\prime}\otimes_k N^{\prime}} M\otimes_k N^{\prime} \rightarrow M\otimes_k N\]
is a monomorphism.
\end{lemma}
\begin{proof}
We will regard $M^{\prime}$ and $N^{\prime}$ as subspaces of $M$ and $N$, respectively.
Choose a $k$-linear basis $\{ m_1, \dots , m_i\}$ for $M$ and a $k$-linear
basis $\{ n_1, \dots ,n_j\}$ for $N$ such that their restrictions to initial
subsequences $\{ m_1, \dots ,m_{i^{\prime}}\}$ and $\{ n_1, \dots ,n_{j^{\prime}}\}$
form $k$-linear bases for $M^{\prime}$ and $N^{\prime}$, respectively.
Then we have the short exact sequence of $k$-vector spaces
\[ 0 \rightarrow M^{\prime}\otimes_k N^{\prime}
 \stackrel{f}{\longrightarrow} M^{\prime}\otimes_k N\oplus M\otimes_k N^{\prime}
 \rightarrow M^{\prime}\otimes_k N\coprod_{M^{\prime}\otimes_k N^{\prime}} M\otimes_k N^{\prime}
 \rightarrow 0.\]
We write $p$ for the composite map
\begin{equation}\label{composite map 1} M^{\prime}\otimes_k N\oplus M\otimes_k N^{\prime}\rightarrow M^{\prime}\otimes_k N\coprod_{M^{\prime}\otimes_k N^{\prime}} M\otimes_k N^{\prime} \rightarrow M\otimes_k N.\end{equation}
Suppose we have an element 
\[ x = \left( \sum_{a=1}^{i^{\prime}}\sum_{b=1}^j \alpha_{a,b} m_a\otimes n_b ,
     \sum_{a=1}^{i}\sum_{b=1}^{j^{\prime}} \beta_{a,b} m_a\otimes n_b \right)
  \in  M^{\prime}\otimes_k N\oplus M\otimes_k N^{\prime} \]
such that $p(x) = 0$. 
Then we have:
\begin{eqnarray*}
0 & = & \sum_{a=1}^{i^{\prime}}\sum_{b=1}^j \alpha_{a,b} m_a\otimes n_b - 
 \sum_{a=1}^{i}\sum_{b=1}^{j^{\prime}} \beta_{a,b} m_a\otimes n_b  \\
  & = & \sum_{a=1}^{i^{\prime}}\sum_{b=1}^{j^{\prime}} (\alpha_{a,b} - \beta_{a,b}) m_a\otimes n_b  + \sum_{a=1}^{i^{\prime}}\sum_{b=j^{\prime}+1}^{j} \alpha_{a,b} m_a\otimes n_b
 + \sum_{a=i^{\prime}+1}^{i}\sum_{b=1}^{j^{\prime}} -\beta_{a,b} m_a\otimes n_b,
 \end{eqnarray*}
hence $\beta_{a,b} = 0$ for all $a>i^{\prime}$, and $\alpha_{a,b} = 0$ for all
$b>j^{\prime}$, and $\alpha_{a,b} = \beta_{a,b}$ if $a \leq i^{\prime}$
and $b \leq j^{\prime}$. These conditions together imply that
$x$ is in the subspace $M^{\prime}\otimes_k N^{\prime}$ of
$M^{\prime}\otimes_k N\oplus M\otimes_k N^{\prime}$, hence the kernel of the
composite map~\ref{composite map 1} is contained in 
$M^{\prime}\otimes_k N^{\prime}$, hence that the map from the quotient
\[ \left( M^{\prime}\otimes_k N\oplus M\otimes_k N^{\prime}\right)/\left(M^{\prime}\otimes_k N^{\prime}\right) \cong M^{\prime}\otimes_k N\coprod_{M^{\prime}\otimes_k N^{\prime}} M\otimes_k N^{\prime}\]
to $M\otimes_k N$ is injective. 
\end{proof}

\begin{prop}\label{existence of multiplicative structure on stable g-thy}
Let $k$ be a field and let $A$ be a co-commutative Hopf algebra over $k$ which is finite-dimensional as a $k$-vector space.
Let $\mathcal{G}_{st}(A)$ be the stable $G$-theory Waldhausen category of $A$
from Corollary~\ref{main cor}, that is, $\mathcal{G}_{st}(A)$
is the category of finitely generated (left) $A$-modules, with
cofibrations the monomorphisms and weak equivalences the stable equivalences.
Then the tensor product $\otimes_k$ over $k$ 
gives $\mathcal{G}_{st}(A)$ a symmetric monoidal product satisfying the assumptions
of Corollary~\ref{wh product cor}.
Hence the stable $G$-theory spectrum
$\Omega \left| wS_{\cdot} \mathcal{C}\right|$ has the structure of a homotopy-commutative ring spectrum.
\end{prop}
\begin{proof}
Finite-dimensional Hopf algebras over fields are known to be quasi-Frobenius; 
see e.g.~\cite{MR738973} for this fact.
That the tensor product over $k$ is symmetric monoidal follows from
the Hopf algebra being co-commutative. That the tensor product over $k$
satisfies the first two conditions
of Definition~\ref{waldhausen product thm}
is because every $A$-module is flat as a $k$-module, since $k$ is a field,
so the tensor product over $k$ preserves monomorphisms.
That the tensor product over $k$ satisfies the third condition
is because of Lemma~\ref{elementary linear algebra lemma}.
\end{proof}

\section{Applications.}

\subsection{The relationship between stable $G$-theory and other $G$-theories and $K$-theories.}

We recall that stable $G$-theory was defined in 
Corollary~\ref{main cor}. It sits naturally in a diagram relating
it to algebraic $K$-theory, algebraic $G$-theory, and the
``derived representation groups,'' but the relationship between these
theories does not seem to be 
simple enough to permit easy computation of one from the others. 
We describe this relationship and provide a few comments in
Remark~\ref{remark on computations} about the
computational task of computing one of these theories once one has computed
the others.

Now we define some notations we use to describe certain Waldhausen categories associated to an abelian
category $\mathcal{C}$:
\begin{definition}
Suppose $\mathcal{C}$ is an abelian category.
We will write:
\begin{itemize}
\item 
$\mathcal{K}^{\oplus}(\mathcal{C})$ for the
{\em split $K$-theory category of $\mathcal{C}$}, i.e., the Waldhausen category
structure on the full subcategory generated by the projective objects of 
$\mathcal{C}$, where cofibrations are split inclusions and
weak equivalences are isomorphisms.
\item 
$\mathcal{K}(\mathcal{C})$ for the
{\em nonsplit $K$-theory category of $\mathcal{C}$}, i.e., the Waldhausen category
structure on the full subcategory generated by the projective objects of 
$\mathcal{C}$, where cofibrations are inclusions and
weak equivalences are isomorphisms.
\item 
$\mathcal{G}^{\oplus}(\mathcal{C})$ for the
{\em split $G$-theory category of $\mathcal{C}$}, i.e., the Waldhausen category
structure on $\mathcal{C}$ where cofibrations are split inclusions and
weak equivalences are isomorphisms.
\item 
$\mathcal{G}(\mathcal{C})$ for the
{\em $G$-theory category of $\mathcal{C}$}, i.e., the Waldhausen category
structure on $\mathcal{C}$ where cofibrations are inclusions and
weak equivalences are isomorphisms.
\item When it exists:
$\mathcal{G}^{\oplus}_{st}(\mathcal{C})$ for the
{\em stable split $G$-theory category of $\mathcal{C}$}, i.e., the Waldhausen category
structure on $\mathcal{C}$ where cofibrations are split inclusions and
weak equivalences are stable equivalences. (In the absolute sense,
i.e., $E$-stable equivalences where $E$ is the allowable class of
all short exact sequences in $\mathcal{C}$.)
\item When it exists:
$\mathcal{G}_{st}(\mathcal{C})$ for the
{\em stable $G$-theory category of $\mathcal{C}$}, i.e., the Waldhausen category
structure on $\mathcal{C}$ where cofibrations are inclusions and
weak equivalences are stable equivalences. (In the absolute sense,
i.e., $E$-stable equivalences where $E$ is the allowable class of
all short exact sequences in $\mathcal{C}$.)
\end{itemize}
\end{definition}

\begin{remark}\label{remark on K_0}
We note that, if $R$ is a ring and $\mathcal{C}$ is the category of finitely generated $R$-modules,
then 
\[ \pi_n\Omega \left| wS_{\cdot}\mathcal{K}^{\oplus}(\mathcal{C})\right|
\cong K_n(R),\]
i.e., the Waldhausen $K$-theory of the split $K$-theory Waldhausen category
recovers the classical algebraic $K$-theory of $R$.
Meanwhile,
\[ \pi_n\Omega \left| wS_{\cdot}\mathcal{G}(\mathcal{C})\right|
\cong G_n(R),\]
i.e., the Waldhausen $G$-theory of the (nonsplit) $G$-theory Waldhausen category
recovers the classical algebraic $G$-theory of $R$.

The other theories are more obscure but still meaningful.
In degree zero, 
\[ \pi_0\Omega \left| wS_{\cdot}\mathcal{G}^{\oplus}(\mathcal{C})\right|
\cong \Rep(R),\]
the representation group (actually ring, under tensor product; but we have not discussed any multiplicative structures on our Waldhausen categories, which is another subject entirely) of $R$---that is, the Grothendieck group completion of the 
monoid of isomorphism classes of finitely-generated $R$-modules. So we sometimes regard the split $G$-theory
as the ``derived representation theory'' and the groups
$\pi_*\Omega \left| wS_{\cdot}\mathcal{G}^{\oplus}(\mathcal{C})\right|$
as the ``derived representation groups'' of $R$.

In degree zero, 
\[ \pi_0\Omega \left| wS_{\cdot}\mathcal{G}^{\oplus}_{st}(\mathcal{C})\right|
\cong \StableRep(R),\]
the stable representation group of $R$---that is, the Grothendieck group completion of the 
monoid of stable equivalence classes of finitely-generated $R$-modules. So we sometimes regard the split stable $G$-theory
as the ``derived stable representation theory'' and the groups
$\pi_*\Omega \left| wS_{\cdot}\mathcal{G}^{\oplus}_{st}(\mathcal{C})\right|$
as the ``derived stable representation groups'' of $R$.

Finally, the results of this paper
are really about the stable $G$-theory groups
\[ \pi_*\Omega \left| wS_{\cdot}\mathcal{G}_{st}(\mathcal{C})\right|
\cong (G_{st})_*(\mathcal{C}) ,\]
as defined in Corollary~\ref{main cor}.
In degree zero, (nonsplit) stable $G$-theory is
\[ \pi_0\Omega \left| wS_{\cdot}\mathcal{G}_{st}(\mathcal{C})\right|
\cong \StableRep(R)/A,\]
the stable representation group modulo the subgroup $A$ generated
by all elements of the form $L - M + N$ where
\[ 0\rightarrow L \rightarrow M \rightarrow N \rightarrow 0\]
is a short exact sequence in $\mathcal{C}$.
\end{remark}

\begin{prop} \label{relation between g and k}
For any abelian category $\mathcal{C}$,
we have a commutative diagram of topological spaces
\begin{equation}\label{main homotopy fibration square} \xymatrix{ 
 \left| wS_{\cdot}\mathcal{K}^{\oplus}(\mathcal{C})\right| \ar[r]\ar[d] &
 \left| wS_{\cdot}\mathcal{G}^{\oplus}(\mathcal{C})\right| \ar[d]\ar[r] &
 \left| wS_{\cdot}\mathcal{G}^{\oplus}_{st}(\mathcal{C})\right| \ar[d] \\
 \left| wS_{\cdot}\mathcal{K}(\mathcal{C})\right| \ar[r] &
 \left| wS_{\cdot}\mathcal{G}(\mathcal{C})\right| \ar[r] & 
 \left| wS_{\cdot}\mathcal{G}_{st}(\mathcal{C})\right| 
 }\end{equation}
in which the horizontal composites are nulhomotopic.

Suppose further that every projective object in $\mathcal{C}$ is also injective.
Then the map
\[  \left| wS_{\cdot}\mathcal{K}^{\oplus}(\mathcal{C})\right| \rightarrow  \left| wS_{\cdot}\mathcal{K}(\mathcal{C})\right| \]
in the above diagram is a homotopy equivalence.

Finally, suppose $\mathcal{C}$ is functorially quasi-Frobenius, has enough projectives, has enough injectives, and every projective object is injective. 
Then the bottom row in diagram~\ref{main homotopy fibration square}
is a homotopy fiber sequence.
\end{prop}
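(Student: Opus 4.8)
The plan is to prove the last assertion by showing something stronger: under the hypothesis that every projective object of $\mathcal{C}$ is injective, the exact functor underlying the left-hand vertical map of diagram~\ref{main homotopy fibration square} is an \emph{isomorphism} of Waldhausen categories. The claimed homotopy equivalence then follows formally, since $wS_{\cdot}(-)$ is functorial and sends an isomorphism of Waldhausen categories to an isomorphism of simplicial categories, hence to a homeomorphism after geometric realization. In particular no appeal to Waldhausen's Additivity or Approximation Theorems is needed.

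Concretely, $\mathcal{K}^{\oplus}(\mathcal{C})$ and $\mathcal{K}(\mathcal{C})$ have the same underlying category, namely the full subcategory of projective objects of $\mathcal{C}$, and the same weak equivalences, namely the isomorphisms; the only a priori difference is that the cofibrations of $\mathcal{K}^{\oplus}(\mathcal{C})$ are the split inclusions, while those of $\mathcal{K}(\mathcal{C})$ are the monomorphisms. So the first key step is to check that these two classes of maps coincide. One containment is trivial: every split inclusion is a monomorphism. For the other, let $f\colon P\to Q$ be a monomorphism between projective objects; since $P$ is projective it is, by hypothesis, injective, so $\id_P$ extends along the monomorphism $f$ to a map $r\colon Q\to P$ with $r\circ f=\id_P$. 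Thus $f$ is a split monomorphism, which in an additive category is exactly a direct-summand (split) inclusion.

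The second step is to confirm that the two Waldhausen structures then agree on the nose, not merely that their classes of cofibrations do. The zero object and the weak equivalences are visibly identical, and in both cases the pushouts demanded by axiom (Cof 3) are computed in $\mathcal{C}$; one only has to see that such a pushout stays inside the subcategory of projectives. But for $f\colon P\to Q$ as above, $\coker f$ is a direct summand of the projective $Q$ and hence projective, and the pushout of $f$ along any map $P\to Y$ of projectives is a monomorphism $Y\to Y\coprod_P Q$ with the same cokernel $\coker f$; so $Y\coprod_P Q$ sits in a short exact sequence with projective quotient, hence is itself a summand of the projective object $Y\oplus\coker f$. Thus the comparison functor is literally the identity functor between two identical Waldhausen categories, and $|wS_{\cdot}(-)|$ of it is the identity homeomorphism.

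I do not anticipate a real obstacle: the entire content is the one-line remark that the source of a monomorphism between projectives is injective and so furnishes a retraction. The only places that want a little care are the (standard) identification of ``split inclusion'' with ``split monomorphism'' in an additive category, and the bookkeeping that pushouts along cofibrations do not leave the subcategory of projectives — and the cokernel computation above disposes of the latter.
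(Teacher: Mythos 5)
Your proposal is correct and is essentially the paper's own argument: the paper likewise observes that, since projectives are injective, any monomorphism between projective objects splits, so the two classes of cofibrations coincide and $\mathcal{K}^{\oplus}(\mathcal{C})\rightarrow\mathcal{K}(\mathcal{C})$ is an isomorphism of Waldhausen categories. Your additional checks (split monos are summand inclusions, pushouts along such cofibrations stay projective) are just a more careful write-up of the same point.
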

\begin{proof}
First, when every projective object in $\mathcal{C}$ is also injective,
then any injective map between projective objects splits, by the universal
property of an injective object; so injections and split injections coincide
in the categories of projective objects in $\mathcal{C}$,
so the functor $\mathcal{K}^{\oplus}(\mathcal{C})\rightarrow
\mathcal{K}(\mathcal{C})$ is an isomorphism of Waldhausen categories.

That the bottom row, under the stated assumptions, 
is a homotopy fiber sequence is an immediate consequence
of Waldhausen's Fibration Theorem~\ref{fibration thm} and
our Theorem~\ref{existence of cylinder functors}.

Everything else here is a consequence of elementary facts from \cite{MR802796}.
\end{proof}

\begin{remark} \label{remark on computations}
There is a computational tool we would really like to have, but don't: we would very much like to be able to compute the homotopy groups of the fiber
$F^1$ of the map
\[ \left| wS_{\cdot} \mathcal{G}^{\oplus}_{st}(\mathcal{C})\right|\rightarrow\left| wS_{\cdot} \mathcal{G}_{st}(\mathcal{C})\right| .\]
From the associated long exact sequence induced by $\pi_*$ as well as the identification we have of $(G^{\oplus}_{st})_0$ and $(G_{st})_0$,
we know (for example) that $\pi_0$ surjects on to the subgroup of the stable representation group $\StabRep(\mathcal{C})$ generated by elements of the form
$M_0 - M_1+ M_2$ for all short exact sequences $0 \rightarrow M_0\rightarrow M_1\rightarrow M_2\rightarrow 0$ in $\mathcal{C}$, but it is hard to say much
about $\pi_i(F^1)$ for $i>0$. Since $(G_{st}^{\oplus})_0(\mathcal{C})$ is the stable representation group of $\mathcal{C}$, one should think of
$(G_{st}^{\oplus})_i(\mathcal{C})$ as the {\em derived} stable representation groups of $\mathcal{C}$. An identification of $\pi_*(F^1)$ would allow one to pass
from a knowledge of $(G_{st})_i(\mathcal{C})$, computable using our localization results in this paper, to a knowledge of the
derived stable representation groups $(G_{st}^{\oplus})_i(\mathcal{C})$. This would be valuable. 

The problem of identifying $F^1$ is a special case of the following general problem: when one changes (expands or contracts) the class of cofibrations in a Waldhausen category $\mathcal{C}$, how does it change the $K$-theory groups $\pi_*(\Omega \left| wS_{\cdot}\mathcal{C}\right| )$? We have an approach to this problem,
especially in the situation here of identifying $F^1$,
in which we use a relative cell decomposition to describe
the homology $H_n(\left| wS_{\cdot}\mathcal{C}\right|, \left| wS_{\cdot}\mathcal{C}^{\oplus}\right|)$ and then, making use of this homology computation and the relative Hurewicz map, we use an analogue of Serre's method of computing
homotopy groups of spheres to work our way up the relative Postnikov tower of
the map $\left| wS_{\cdot}\mathcal{C}^{\oplus}\right|\rightarrow\left| wS_{\cdot}\mathcal{C}\right|$. This technique is difficult and in any case is 
beyond the scope
of the present paper. We hope to return to it in a later paper.
\end{remark}

\subsection{Applications to algebras.}

In this section we will finally apply our results to actual rings and algebras! We will frequently assume that the rings in question are quasi-Frobenius.
For an explanation of why this implies the functorial cone-Frobenius condition, see
Proposition~\ref{injectiveresolutions cited result 1}, Theorem~\ref{injectiveresolutions cited result 2}, and Cors.~\ref{injectiveresolutions cited result 3},~\ref{injectiveresolutions cited result 4}, and~\ref{injectiveresolutions cited result 5}.
Throughout this section, whenever we assume that an algebra is over a {\em finite} field, the only reason we assume finiteness of the field is so that
the above construction gives a cone functor on the finitely-generated module category; if one can extend this construction to finitely-generated modules over
algebras over more general fields, then one can do away with the finiteness assumption on the field.

\begin{prop}{\bf (Stable $G$-theory is a delooping of relative algebraic $K$-theory.)}\label{stable G is delooping}
Suppose $k$ is a finite field and $A$ is a quasi-Frobenius 
$k$-algebra which is finite-dimensional as a $k$-vector space. Suppose
$f:A \rightarrow k$ is a surjective morphism of $k$-algebras with nilpotent kernel. 
Write $K(A\rightarrow k)$ for the relative $K$-theory space of
$f$, that is, the fiber of the map
\[ \Omega \left| wS_{\cdot}\mathcal{K}(A)\right| \rightarrow\Omega \left| wS_{\cdot}\mathcal{K}(k)\right| .\]
Then we have a homotopy equivalence $K(A \rightarrow k) \simeq \Omega^2\left|\mathcal{G}_{st}(A)\right|$, hence
\[ (G_{st})_n(A) \cong K_{n-1}(A \rightarrow k)\]
for all $n>0$.
\end{prop}
\begin{proof}
We have the commutative diagram of (pointed) spaces
\begin{equation}\label{comm diag 100} \xymatrix{
K(A\rightarrow k) \ar[r]\ar[d] & 
\Omega \left| wS_{\cdot}\mathcal{K}(A)\right| \ar[r] \ar[d] & 
\Omega \left| wS_{\cdot}\mathcal{K}(k)\right|  \ar[d] \\
G(A\rightarrow k) \ar[r]\ar[d] & 
\Omega \left| wS_{\cdot}\mathcal{G}(A)\right| \ar[r] \ar[d] & 
\Omega \left| wS_{\cdot}\mathcal{G}(k)\right|  \ar[d] \\
\Omega \left| wS_{\cdot}\mathcal{G}_{st}(A)\right| \ar[r] & 
\Omega \left| wS_{\cdot}\mathcal{G}_{st}(A)\right| \ar[r]  & 
\pt }\end{equation}
in which the spaces $K(A\rightarrow k)$ and $G(A\rightarrow k)$ are defined
to be the homotopy fibers of the evident maps, so that the rows
in diagram~\ref{comm diag 100} are homotopy fiber sequences.

We note that the middle and right-hand columns in diagram~\ref{comm diag 100}
are also homotopy fiber sequences, by Proposition~\ref{relation between g and k}.
Since all rows and the rightmost two columns in diagram~\ref{comm diag 100}
are homotopy fiber sequences, so is the left-hand column.
By Quillen's filtration argument in \cite{MR0338129},
one also knows that any nilpotent extension of algebras over a field
induces a homotopy equivalence in $G$-theory spaces, so
the map
$\Omega \left| wS_{\cdot}\mathcal{G}(A)\right| \rightarrow
\Omega \left| wS_{\cdot}\mathcal{G}(k)\right|$
is a homotopy equivalence.
So its fiber
$G(A\rightarrow k)$ is contractible. 
Hence the left-hand column of diagram~\ref{comm diag 100}
reads, up to homotopy equivalence,
\[ K(A \rightarrow k) \rightarrow \pt \rightarrow \Omega \left| wS_{\cdot}\mathcal{G}_{st}(A)\right| .\]
Hence $K(A\rightarrow k) \simeq \Omega^2 \left| wS_{\cdot}\mathcal{G}_{st}(A)\right|$.
\end{proof}

Now we recall Gabber's rigidity theorem, from e.g. Chapter 4 of~\cite{MR3076731}:
\begin{theorem}{\bf (Gabber.)}\label{gabber rigidity}
Suppose $A$ is a commutative ring and $(A,I)$ is a Hensel pair.
Suppose $1/i\in A$. Then, for all $n>0$, the map
\[ K_n(A; \mathbb{Z}/i\mathbb{Z}) \rightarrow 
K_n(A/I; \mathbb{Z}/i\mathbb{Z})\]
is an isomorphism.
\end{theorem}

\begin{theorem}\label{main thm computing Gst}
Suppose $k$ is a finite field of characteristic $p$ 
and $A$ is a quasi-Frobenius commutative
$k$-algebra which is finite-dimensional as a $k$-vector space. Suppose
$f:A \rightarrow k$ is a surjective morphism of $k$-algebras with nilpotent kernel.
Then 
we have the following computation of the stable $G$-theory groups of 
$A$:
\begin{itemize}
\item $(G_{st})_0(A)$ is the abelian group whose objects are the stable equivalence
classes of finitely-generated $A$-modules, modulo the relation $[L]+[N]=[M]$ if
there exists a short exact sequence $0\rightarrow L \rightarrow M \rightarrow N \rightarrow 0$.
\item For $n>0$, $(G_{st})_n(A)$ is uniquely $\ell$-divisible for all primes
$\ell\neq p$. (So $(G_{st})_n(A)$ has no summands of the form $\mathbb{Z}/\ell^i\mathbb{Z}$ or $\hat{\mathbb{Z}}_{\ell}$ or $(\mathbb{Q}/\mathbb{Z})_{\ell}$,
but it may have summands of the form $\mathbb{Q}$, for example.)
\item For $n>0$, the modulo $p^i$ stable $G$-theory 
$(G_{st})_n(A; \mathbb{Z}/p^i\mathbb{Z})$ is isomorphic to the topological cyclic homology
$TC_{n-1}(A; \mathbb{Z}/p^i\mathbb{Z})$.
\end{itemize}
\end{theorem}
\begin{proof}
\begin{itemize}
\item The statement about $(G_{st})_0(A)$ is the usual identification of $K_0$
of a Waldhausen category; see Remark~\ref{remark on K_0}.
\item Suppose $\ell$ is a prime distinct from $p$.
From Gabber rigidity, Theorem~\ref{gabber rigidity}, 
we know that $K_n(A\rightarrow k; \mathbb{Z}/\ell^i\mathbb{Z}) \cong 0$ for 
$n\geq 1$. Since $A$ is local, we also know that
the map $K_0(A) \rightarrow K_0(k)$ is an isomorphism (the identity map
from $\mathbb{Z}$ to $\mathbb{Z}$), so 
$K_0(A\rightarrow k;\mathbb{Z}/\ell^i\mathbb{Z})$ vanishes as well.
From the short exact sequence
\[ 0 \rightarrow K_n(A \rightarrow k)/\ell^i \rightarrow
 K_n(A \rightarrow k; \mathbb{Z}/\ell^i\mathbb{Z}) \rightarrow
 \ell^i\mbox{\ torsion\ in\ }K_{n-1}(A \rightarrow k)\rightarrow 0\]
and vanishing of the middle term for $n>0$,
we then know that $K_n(A\rightarrow k)/\ell^i\cong 0$ for $n>0$,
and $K_n(A\rightarrow k)$ is $\ell^i$-torsion-free for $n\geq 0$.
So, in particular, $K_n(A\rightarrow k)$ is uniquely $\ell$-divisible
for all $n\geq 0$.
By Proposition~\ref{stable G is delooping},
we know that $(G_{st})_n(A)\cong K_{n-1}(A \rightarrow k)$
for $n\geq 1$, so for $n\geq 1$ we conclude that
$(G_{st})_n(A)$ is uniquely $\ell$-divisible.
\item McCarthy's theorem (see e.g. Madsen's article~\cite{MR1474979} for a good expository account) implies that
$K_n(A\rightarrow k; \mathbb{Z}/p^i\mathbb{Z})\cong TC_n(A\rightarrow k; \mathbb{Z}/p^i\mathbb{Z}$). Now use Proposition~\ref{stable G is delooping}.
\end{itemize}
\end{proof}

One of our results (in Theorem~\ref{main thm, multiplicative version}, below) involves proving that certain stable $G$-theory spectra are complex oriented.
To prepare for that result, we provide the definitions of a complex orientation.
\begin{definition} Let $E$ be a homotopy-commutative ring spectrum. By a {\em complex orientation} on $E$ we mean a choice of element
$\chi\in \tilde{E}^2(BU(1))$ with the property that $\chi$ maps to the multiplicative unit element $1\in E^0(\pt)$ under the composite
\[ \tilde{E}^2(BU(1)) \rightarrow \tilde{E}^2(S^2) \stackrel{\cong}{\longrightarrow} \tilde{E}^0(S^0)\stackrel{\cong}{\longrightarrow} E^0(\pt),\]
where the left-hand map is the map induced in $E$-cohomology by inclusion of the $2$-skeleton $S^2$ of $BU(1)$, the classifying space
of complex line bundles.

If $E$ admits a complex orientation, we sometimes say that $E$ is {\em complex oriented.}
\end{definition}
Complex orientations are important for geometric purposes, since the element $\chi$ behaves essentially like a first Chern class, allowing one to carry out 
geometric arguments using $E$-cohomology that require characteristic classes for line bundles. Complex orientations also connect powerfully to number theory,
via formal group laws: a complex orientation on a homotopy-commutative ring spectrum $E$ gives rise to a one-dimensional commutative formal group law
on $\pi_*(E)$, and much of the homotopy theory of complex oriented ring spectra can be described completely in terms of the
moduli theory of one-dimensional formal groups. Adams's book \cite{MR1324104} is an excellent reference for this material.

\begin{definition}
Suppose $k$ is a finite field and $A$ is a quasi-Frobenius 
$k$-algebra which is finite-dimensional as a $k$-vector space.
We will write $g_{st}(A)$ for the {\em connective stable $G$-theory spectrum of $A$}, that is,
the connective cover of the spectrum $\Omega \left| wS_{\cdot}\mathcal{G}_{st}(A)\right|$.
We will write $(g_{st})_i(A)$ for the $i$th homotopy group $\pi_i(g_{st}(A))$.
\end{definition}
If $A$ is also a co-commutative Hopf algebra over $k$,
then by Proposition~\ref{existence of multiplicative structure on stable g-thy} and
the fact that connective covers of homotopy-commutative ring spectra are also homotopy-commutative ring spectra,
$g_{st}(A)$ is a homotopy-commutative ring spectrum.

A note for the reader who is not used to connective covers: the essential property of the connective cover $x$ of a spectrum $X$
is that $\pi_iX \cong \pi_ix$ for $i\geq 0$, and $\pi_ix\cong 0 $ for $i<0$.

Now one wants to use Proposition~\ref{existence of multiplicative structure on stable g-thy}
to get a multiplicative version of Theorem~\ref{main thm computing Gst}.
So we have the following, which also uses Hesselholt-Madsen's computation
of the topological cyclic homology of truncated polynomial rings
(see e.g. the survey article~\cite{MR1474979}):
\begin{theorem}\label{main thm, multiplicative version}
Suppose $k$ is a finite field of characteristic $p$.
Let $A\cong k[x]/x^{p^n}$ for some positive integer $n$.
Then we have the following results:
\begin{itemize}
\item $(g_{st})_0(A) \cong \mathbb{Z}/p^n\mathbb{Z}$.
\item For all positive integers $m$, $(g_{st})_{2m}(A) \cong TC_{2m-1}(A; \hat{\mathbb{Z}}_p) \cong 
\mathbb{W}_{mp^n-1}(k)/V_{p^n}\mathbb{W}_{m-1}(k)$,
where $\mathbb{W}_i(k)$ is the standard filtration quotient $i$ 
of the ring of big Witt vectors, that is,
\[ \mathbb{W}_i(k) = (1+Xk[[X]])^{\times}/(1+X^{m+1}k[[X]])^{\times} ,\]
and $V_j$ is the Verschiebung morphism sending a power series $f(X)$
to the power series $f(X^{j})$.
\item For all positive integers $m$, $(g_{st})_{2m-1}(A) \cong 0$.
\item $g_{st}(A)$ is a complex oriented ring spectrum.
\end{itemize}
\end{theorem}
\begin{proof}
Throughout, we implicitly use the fact that $(G_{st})_i(A)\cong (g_{st})_i(A)$ for $i\geq 0$.
\begin{itemize}
\item To get the isomorphism $(G_{st})_0(A) \cong \mathbb{Z}/p^n\mathbb{Z}$
we use the identification of degree zero stable $G$-theory,
in Remark~\ref{remark on K_0}, as the group of stable equivalence
classes of finitely generated $A$-modules, modulo the relation
splitting all short exact sequences.
Now $A$ is a principal ideal domain, hence its finitely-generated module category is Krull-Schmidt and every indecomposable finitely-generated module
is cyclic, and for every $i\leq p^n$, we have the short exact sequence
\[ 0 \rightarrow k \rightarrow k[x]/x^i \rightarrow k[x]/x^{i-1} \rightarrow 0\]
of $A$-modules, 
so we have the relation $[k] + [k[x]/x^{i-1}] = [k[x]/x^i]$
in $(G_{st})_0(A)$, so $[k]$ generates $(G_{st})_0(A)$.
Finally, we observe that $[k[x]/x^{p^n}] = p^n[k]$ is zero in stable
$G$-theory. So $(G_{st})_0(A)\cong \mathbb{Z}/p^n\mathbb{Z}$, generated
by the stable equivalence class of the $A$-module $k$.
\item Note that we have a description of the stable $G$-theory of
$A$ in Theorem~\ref{main thm computing Gst} up to uniquely
$\ell$-divisible summands. But, 
by Proposition~\ref{existence of multiplicative structure on stable g-thy},
the spectrum of the infinite loop space 
$\Omega \left| wS_{\cdot} \mathcal{G}_{st}(A) \right|$ is a
homotopy-commutative ring spectrum. So each homotopy group 
$\pi_i\Omega \left| wS_{\cdot} \mathcal{G}_{st}(A) \right|\cong (G_{st})_i(A)$
is a module over $\pi_0\Omega \left| wS_{\cdot} \mathcal{G}_{st}(A) \right|
\cong (G_{st})_0(A) \cong\mathbb{Z}/p^n\mathbb{Z}$.
The only uniquely $\ell$-divisible group admitting the structure of a 
$\mathbb{Z}/p^n\mathbb{Z}$-module is the trivial group.
So the uniquely divisible factors whose existence is not ruled out
by Theorem~\ref{main thm computing Gst} are indeed ruled out 
in this case, by the multiplicative structure.

The computation of $TC_*(A; \hat{\mathbb{Z}}_p)$ is due to 
Hesselholt and Madsen, as in \cite{MR1474979}.
\item
That $(G_{st})_*(A)$ vanishes in odd positive degrees is similar to the computation in even degrees.
Theorem~\ref{main thm computing Gst} guarantees that, up to uniquely $\ell$-divisible
summands, $(G_{st})_*(A)$ agrees with topological cyclic homology with a degree shift, and
Hesselholt and Madsen compute that the topological cyclic homology of $A$
vanishes in even degrees. So $(G_{st})_*(A)$ is uniquely $\ell$-divisible in odd positive degrees.
But we know that $(G_{st})_0(A) \cong \mathbb{Z}/p^n\mathbb{Z}$, and as a consequence of
Proposition~\ref{existence of multiplicative structure on stable g-thy}, we know that
$(G_{st})_i(A)$ is a $(G_{st})_0(A)$-module for all $i$. So the uniquely $\ell$-divisible summands
must be trivial. So $(G_{st})_i(A)$ is trivial for odd positive $i$.
\item
We have the inclusion of the $2$-skeleton $S^2\hookrightarrow BU(1)$ into the classifying space of
complex line bundles, and the map it induces of $g_{st}(A)$-cohomological Atiyah-Hirzebruch spectral sequences:
\[ \xymatrix{
H^p(BU(1); (g_{st}(A))^q)  \ar@{=>}[r]\ar[d] & g_{st}(A)^{p+q}(BU(1)) \ar[d] \\
H^p(S^2; (g_{st}(A))^q) \ar@{=>}[r] & g_{st}(A)^{p+q}(S^2). }\]
Since the homotopy groups of $g_{st}(A)$ and the cohomology groups of $BU(1)$ 
are both concentrated in even degrees, there is no room for differentials in the spectral sequence for $BU(1)$.
We conclude that the map of spectral sequences is simply projection on to the $p=0$ and $p=2$ columns, and that in particular, every element
in $H^2(S^2; (g_{st}(A))^0)\cong g_{st}(A)^0(\pt)$---in particular, the multiplicative unit element---is in the image of the projection from
$H^2(BU(1); (g_{st}(A))^0) \cong g_{st}(A)^2(BU(1))$. 
So $g_{st}(A)$ is complex oriented.
\end{itemize}
\end{proof}

\bibliography{/home/asalch/texmf/tex/salch}{}
\bibliographystyle{plain}
\end{document}